\theoremstyle{plain}
  \declaretheorem[numberwithin=section]{theorem}
  \declaretheorem[numberlike=theorem]{corollary}
  \declaretheorem[numberlike=theorem]{proposition}
  \declaretheorem[numberlike=theorem]{lemma}
\theoremstyle{definition}
  \declaretheorem[numberlike=theorem]{example}
  \declaretheorem[numberlike=theorem]{remark}
\newcommand{\nequiv}{\mathrel{\not\equiv}}
\begin{document}

\title{Generalized Lucas congruences and linear $p$-schemes}

\author{
  Joel A.~Henningsen
  \thanks{Department of Mathematics and Statistics, University of South Alabama
  \newline\indent\hspace{6pt}\textit{Current affiliation:} Department of Mathematics, Baylor University}
  \and
  Armin Straub
  \thanks{Department of Mathematics and Statistics, University of South Alabama
  \newline\indent\hspace{6pt}\textit{Email:} \texttt{straub@southalabama.edu}}
}

\date{November 16, 2021}

\maketitle

\begin{abstract}
  We observe that a sequence satisfies Lucas congruences modulo $p$ if and
  only if its values modulo $p$ can be described by a linear $p$-scheme, as
  introduced by Rowland and Zeilberger, with a single state. This simple
  observation suggests natural generalizations of the notion of Lucas
  congruences. To illustrate this point, we prove explicit generalized Lucas
  congruences for integer sequences that can be represented as the constant
  terms of $P (x, y)^n Q (x, y)$ where $P$ and $Q$ are certain Laurent
  polynomials.
\end{abstract}

\section{Introduction}

Throughout this paper, let $p$ be a prime. We say that a sequence $A (n)$
satisfies the {\emph{Lucas congruences}} modulo $p$ if, for all $n \in
\mathbb{Z}_{\geq 0}$,
\begin{equation}
  A (n) \equiv A (n_0) A (n_1) \cdots A (n_r) \pmod{p},
  \label{eq:lucas}
\end{equation}
where $n = n_0 + n_1 p + \cdots + n_r p^r$ is the expansion of $n$ in base $p$
(in the case $n = 0$, the right-hand side is the empty product so that
\eqref{eq:lucas} specializes to $A (0) \equiv 1$). These congruences are named
after Lucas \cite{lucas78} who showed such congruences for the binomial
coefficients. Since then, Lucas congruences have received considerable
attention in the literature and many authors have shown that certain sequences
satisfy the Lucas congruences. We refer to \cite{mcintosh-lucas},
\cite{granville-bin97}, \cite{sd-laurent09}, \cite{ms-lucascongruences},
\cite{delaygue-apery}, \cite{abd-lucas}, \cite{gorodetsky-ct} and the
references therein for recent results of a more general nature as well as more
background and details. A historical survey of Lucas-type congruences can be
found in \cite{mestrovic-lucas}.

Rowland and Zeilberger \cite{rz-cong} recently introduced the notion of
{\emph{linear $p$-schemes}} to efficiently describe certain sequences $A (n)$
modulo prime powers. Namely, given a sequence $A : \mathbb{Z}_{\geq 0}
\rightarrow \mathbb{Z}$, a linear $p$-scheme for $A (n) \pmod{p^r}$ consists of {\emph{states}} $A_0, A_1, \ldots, A_m :
\mathbb{Z}_{\geq 0} \rightarrow \mathbb{Z}$ with $A_0 (n) = A (n)$ such
that, for all $i \in \{ 0, 1, \ldots, m \}$, \ $k \in \{ 0, 1, \ldots, p - 1
\}$ and $n \geq 0$,
\begin{equation}
  A_i (p n + k) \equiv \sum_{j = 0}^m \alpha_{i, j}^{(k)} A_j (n) \pmod{p^r} \label{eq:linearscheme}
\end{equation}
for some integers $\alpha_{i, j}^{(k)}$. Note that this linear $p$-scheme has
$m + 1$ states and is fully determined by the values $\alpha_{i, j}^{(k)}$
together with the initial conditions $c_i = A_i (0)$, all modulo $p^r$.
Rowland and Zeilberger \cite{rz-cong} describe algorithms to automatically
obtain such a linear $p$-scheme, for fixed $p^r$, for any sequence $A (n)$
expressible as {\emph{constant terms}}, meaning that
\begin{equation}
  A (n) = \operatorname{ct} [P (\boldsymbol{x})^n Q (\boldsymbol{x})], \label{eq:ct:pq}
\end{equation}
where $P, Q \in \mathbb{Z} [\boldsymbol{x}^{\pm 1}]$ are Laurent polynomials in
$\boldsymbol{x}= (x_1, \ldots, x_d)$, and $\operatorname{ct} [f (\boldsymbol{x})]$ denotes
the constant term of the Laurent polynomial $f (\boldsymbol{x})$. Previously,
Rowland and Yassawi \cite{ry-diag13} had described similar algorithms for
sequences that are diagonals of multivariate rational functions. We also note
that $A (n) \pmod{p^r}$ can be described by a linear
$p$-scheme, as above, if and only if $A (n) \pmod{p^r}$ is
$p$-automatic. In general, linear $p$-schemes over a ring $R$ represent
$p$-regular sequences \cite[Theorem~2.2(d)]{as-k-regular}; if $R$ is finite
(as is the case for our considerations, since $R =\mathbb{Z}/ p^r
\mathbb{Z}$), $p$-regular sequences coincide with $p$-automatic sequences
\cite[Theorem~2.3]{as-k-regular}.

\begin{example}
  The Catalan numbers $C (n)$ have the constant term expression
  \begin{equation}
    C (n) = \operatorname{ct} [(x^{- 1} + 2 + x)^n (1 - x)] . \label{eq:catalan:ct}
  \end{equation}
  Based on \eqref{eq:catalan:ct}, the algorithm of Rowland and Zeilberger
  \cite{rz-cong} can be used to construct linear $p$-schemes that describe
  the Catalan numbers modulo any fixed prime power. For instance, we find the
  following $2$-state linear $3$-scheme describing $C (n)$ modulo $3$:
  \begin{equation*}
    \begin{array}{rll}
       A_0 (3 n) & = & A_1 (n)\\
       A_0 (3 n + 1) & = & A_1 (n)\\
       A_0 (3 n + 2) & = & A_0 (n) + A_1 (n)
     \end{array} \qquad \begin{array}{rll}
       A_1 (3 n) & = & A_1 (n)\\
       A_1 (3 n + 1) & = & 2 A_1 (n)\\
       A_1 (3 n + 2) & = & 0
     \end{array}
  \end{equation*}
  Together with the initial conditions $A_0 (0) = A_1 (0) = 1$, this scheme
  uniquely describes all the values taken by the sequences $A_0$, $A_1$ and,
  therefore, the Catalan numbers $C (n) \equiv A_0 (n)$ modulo $3$. (On the
  other hand, Example~\ref{eg:catalan:3} offers a more transparent
  characterization of the Catalan numbers modulo $3$.)
\end{example}

\begin{remark}
  \label{rk:ct}We note that all constant term representations claimed in this
  paper can be algorithmically proven using, for instance, creative
  telescoping \cite{koutschan-phd}. We refer to \cite{gorodetsky-ct} for
  worked out examples of this approach.
\end{remark}

In the following, we make the rather simple observation that a sequence
satisfies the Lucas congruences modulo $p$ if and only if its values modulo
$p$ can be encoded by a linear $p$-scheme with exactly one state.

\begin{proposition}
  \label{prop:lucas:scheme}Suppose that $A (0) = 1$. Then the sequence $A (n)$
  satisfies the Lucas congruences \eqref{eq:lucas} modulo $p$ if and only if
  the values $A (n)$ modulo $p$ can be encoded by a linear $p$-scheme with a
  single state.
\end{proposition}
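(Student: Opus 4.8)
The plan is to prove both directions of the equivalence by directly translating between the defining equation of a single-state linear $p$-scheme and the Lucas congruences. A single-state scheme for $A(n) \pmod p$ consists of just the sequence $A_0 = A$ together with constants $\alpha^{(k)} := \alpha_{0,0}^{(k)}$ for $k \in \{0, 1, \ldots, p-1\}$, so that \eqref{eq:linearscheme} reduces to
\begin{equation*}
  A(p n + k) \equiv \alpha^{(k)} A(n) \pmod{p}.
\end{equation*}
The key observation is that, writing $n$ in base $p$ as $n = n_0 + n_1 p + \cdots + n_r p^r$, we have $n = p \lfloor n/p \rfloor + n_0$ with $n_0$ the least significant digit, and $\lfloor n/p \rfloor$ has base-$p$ digits $n_1, n_2, \ldots, n_r$. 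This is exactly the recursive structure that \eqref{eq:lucas} respects.

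First I would prove the ``only if'' direction. Assuming $A(n)$ satisfies \eqref{eq:lucas} with $A(0) = 1$, I would define $\alpha^{(k)} := A(k)$ for each $k \in \{0, 1, \ldots, p-1\}$ and verify that \eqref{eq:linearscheme} holds for the single state. Indeed, applying \eqref{eq:lucas} to $pn + k$, whose base-$p$ digits are $k$ followed by the digits of $n$, the product factors as $A(k)$ times the product over the digits of $n$, and the latter product is congruent to $A(n)$ by a second application of \eqref{eq:lucas}. This yields $A(pn + k) \equiv A(k) A(n) = \alpha^{(k)} A(n) \pmod p$, so the constants $\alpha^{(k)}$ together with $c_0 = A(0) = 1$ furnish a single-state linear $p$-scheme.

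For the ``if'' direction, I would suppose $A(n) \equiv \alpha^{(k)} A(n') \pmod p$ whenever $n = pn' + k$ and proceed by induction on the number of base-$p$ digits of $n$ (equivalently, on $n$). The base case $n = 0$ gives $A(0) = 1$, matching the empty product. For the inductive step, write $n = pn' + n_0$ where $n'$ has digits $n_1, \ldots, n_r$; the scheme gives $A(n) \equiv \alpha^{(n_0)} A(n')$, and the induction hypothesis gives $A(n') \equiv A(n_1) \cdots A(n_r)$. It remains to identify $\alpha^{(k)}$ with $A(k)$: applying the scheme relation to $n = k$ for $0 \le k \le p-1$ (so $n' = 0$) gives $A(k) \equiv \alpha^{(k)} A(0) = \alpha^{(k)}$, which pins down the constants. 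Substituting recovers \eqref{eq:lucas}.

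I do not anticipate a serious obstacle here, as the statement is essentially a bookkeeping identity between two recursive descriptions; the main thing to get right is the careful matching of the scheme constants $\alpha^{(k)}$ with the single-digit values $A(k)$, and ensuring the normalization $A(0) = 1$ is used correctly so that the empty-product convention in \eqref{eq:lucas} is consistent with the initial condition $c_0 = 1$ of the scheme. One subtlety worth stating explicitly is that the equivalence is between the congruences holding \emph{for all} $n$ and the scheme relations holding \emph{for all} $n$ and $k$, so both directions are genuinely recursive rather than a single-step check.
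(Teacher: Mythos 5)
Your proposal is correct and follows essentially the same route as the paper's proof: both pin down the scheme constants via $n = 0$ and $A(0) = 1$ to get $\alpha^{(k)} \equiv A(k) \pmod{p}$, reducing the scheme relation to $A(pn+k) \equiv A(k)A(n) \pmod{p}$, which is then matched with the Lucas congruences \eqref{eq:lucas}. The only difference is that you spell out the digit-by-digit induction establishing the equivalence of $A(pn+k) \equiv A(k)A(n)$ with \eqref{eq:lucas}, a step the paper treats as immediate.
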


\begin{proof}
  Recall that, by definition, a single-state linear $p$-scheme for $A (n)$
  modulo $p$ consists of the single state $A_0 (n) \equiv A (n) \pmod{p}$ and has the property that, for all $k \in \{ 0, 1, \ldots, p
  - 1 \}$ and $n \geq 0$,
  \begin{equation*}
    A_0 (p n + k) \equiv \alpha_{0, 0}^{(k)} A_0 (n) \pmod{p} .
  \end{equation*}
  Setting $n = 0$ in this relation and using that $A (0) = 1$, we see that we
  necessarily have $\alpha_{0, 0}^{(k)} = A_0 (k)$. The relation therefore
  becomes $A (p n + k) \equiv A (k) A (n) \pmod{p}$, which
  is equivalent to the Lucas congruences \eqref{eq:lucas} modulo $p$.
\end{proof}

Proposition~\ref{prop:lucas:scheme} places the notion of Lucas congruences in
a larger context. In particular, it suggests generalizations such as the
following: one might say that a sequence satisfies an order $k$ version of the
Lucas congruences if its values modulo $p$ can be encoded by a linear
$p$-scheme with $k$ states.

In Section~\ref{sec:lucasx}, to illustrate this point, we prove such
generalized Lucas congruences of order $2$ in Corollary~\ref{cor:lucasx:simpl}
(see Remark~\ref{rk:lucas:2}). We obtain these as a special case of
Theorem~\ref{thm:lucasx} which itself provides generalized Lucas congruences
of order up to $4$ for certain constant terms $A (n) = \operatorname{ct} [P (x, y)^n Q
(x, y)]$. Another way to interpret these results is as explicitly describing
linear $p$-schemes for these sequences modulo all primes $p$ at the same time
(whereas a usual application of the algorithms of Rowland and Zeilberger
\cite{rz-cong} provides such schemes for fixed $p$). In
Section~\ref{sec:catalan}, we apply Theorem~\ref{thm:lucasx} to the case of
the Catalan numbers \eqref{eq:catalan:ct}.

In preparation for proving the results of Section~\ref{sec:lucasx}, we review
generalized central trinomial numbers in Section~\ref{sec:trinomial}. First,
however, in Section~\ref{sec:lucas}, we review, and give a simple proof of,
the following fundamental result which is remarkably effective in proving that
certain sequences satisfy Lucas congruences.

\begin{theorem}
  \label{thm:lucas:intro}Let $P \in \mathbb{Z} [\boldsymbol{x}^{\pm 1}]$ be such
  that its Newton polytope has the origin as its only interior integral point.
  Then $A (n) = \operatorname{ct} [P (\boldsymbol{x})^n]$ satisfies the Lucas
  congruences \eqref{eq:lucas} for any prime $p$.
\end{theorem}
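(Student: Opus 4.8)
The plan is to establish the Lucas congruences one base-$p$ digit at a time and then induct. Writing $n = n_0 + p m$ with $0 \leq n_0 \leq p - 1$ and $m = \lfloor n / p \rfloor$, the goal of the main step is to prove the single-digit relation
\[
  A (n) \equiv A (n_0)\, A (m) \pmod{p} .
\]
Iterating this relation along the expansion $n = n_0 + n_1 p + \cdots + n_r p^r$ by strong induction on $n$ then yields \eqref{eq:lucas}: the case $n < p$ is trivial, and the empty-product convention $A (0) = \operatorname{ct}[1] = 1$ matches the normalization in the hypothesis $A(0)=1$.

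First I would invoke the multivariate freshman's dream: since $p$ is prime, Fermat's little theorem together with the multinomial theorem gives $P (\boldsymbol{x})^p \equiv P (\boldsymbol{x}^p) \pmod{p}$ for Laurent polynomials, where $\boldsymbol{x}^p = (x_1^p, \ldots, x_d^p)$. Consequently
\[
  P (\boldsymbol{x})^n = P (\boldsymbol{x})^{n_0} P (\boldsymbol{x})^{p m} \equiv P (\boldsymbol{x})^{n_0} P (\boldsymbol{x}^p)^m \pmod{p},
\]
so that taking constant terms reduces the problem to understanding $\operatorname{ct}[P (\boldsymbol{x})^{n_0} P (\boldsymbol{x}^p)^m]$ modulo $p$.

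The heart of the argument, and the step I expect to be the main obstacle, is a Newton-polytope selection principle. Expanding $P (\boldsymbol{x})^{n_0} = \sum_{\boldsymbol{a}} c_{\boldsymbol{a}} \boldsymbol{x}^{\boldsymbol{a}}$ and $P (\boldsymbol{x})^m = \sum_{\boldsymbol{b}} d_{\boldsymbol{b}} \boldsymbol{x}^{\boldsymbol{b}}$, the relevant constant term equals $\sum_{\boldsymbol{b}} c_{- p \boldsymbol{b}}\, d_{\boldsymbol{b}}$, since a monomial $\boldsymbol{x}^{\boldsymbol{a} + p \boldsymbol{b}}$ is constant exactly when $\boldsymbol{a} = - p \boldsymbol{b}$. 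I must show that only $\boldsymbol{b} = \boldsymbol{0}$ contributes. If $c_{- p \boldsymbol{b}} \neq 0$ then $- p \boldsymbol{b}$ lies in the support of $P^{n_0}$, hence in $n_0 N$, where $N$ denotes the Newton polytope of $P$; equivalently $- \boldsymbol{b} \in \tfrac{n_0}{p} N$. Here I would use the convex-geometry fact that, because the origin is an interior point of $N$ and $0 \leq \tfrac{n_0}{p} < 1$, the scaled body $\tfrac{n_0}{p} N$ is contained in the interior of $N$. Thus $- \boldsymbol{b}$ is an integral interior point of $N$, and the hypothesis that the origin is the \emph{only} such point forces $\boldsymbol{b} = \boldsymbol{0}$ (the case $n_0 = 0$ is immediate, as then $P^{n_0} = 1$).

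With only $\boldsymbol{b} = \boldsymbol{0}$ surviving, the sum collapses to $c_{\boldsymbol{0}}\, d_{\boldsymbol{0}} = \operatorname{ct}[P^{n_0}] \cdot \operatorname{ct}[P^m] = A (n_0) A (m)$, which establishes the single-digit relation and completes the induction. The only delicate point is the convexity statement; everything else is formal. I would either cite a standard reference for the fact that scaling a convex body by a factor less than one about an interior point lands strictly inside it, or prove it directly by writing $\lambda \boldsymbol{v}$, for $\boldsymbol{v} \in N$ and $0 \leq \lambda < 1$, as a convex combination of $\boldsymbol{v} \in N$ with a small open ball about the origin contained in $N$.
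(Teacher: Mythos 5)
Your proof is correct and takes essentially the same route as the paper's: reduce to the single-digit relation $A(pn+k) \equiv A(n)A(k) \pmod{p}$ via the freshman's dream, then use the interior-integral-point hypothesis on the Newton polytope to show that only the constant term survives the exponent matching. The paper merely packages your coefficient-matching step in the Cartier operator $\Lambda_p$ and phrases your scaling fact as a convex combination carrying positive weight at the interior origin; the substance is identical.
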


Theorem~\ref{thm:lucas:intro} is a special case of a result of Samol and van
Straten \cite{sd-laurent09} (as well as Mellit and Vlasenko
\cite{mv-laurent13}), which shows that, if the Newton polytope of $P
(\boldsymbol{x})$ has the origin as its only interior integral point, then $A
(n) = \operatorname{ct} [P (\boldsymbol{x})^n]$ satisfies the {\emph{Dwork congruences}}
\begin{equation}
  A (p^r m + n) A (\lfloor n / p \rfloor) \equiv A (p^{r - 1} m + \lfloor n /
  p \rfloor) A (n) \pmod{p^r} \label{eq:dwork}
\end{equation}
for all primes $p$ and all integers $m, n \geq 0$, $r \geq 1$. The
case $r = 1$ of these congruences is equivalent to the Lucas congruences
\eqref{eq:lucas}. As such, Theorem~\ref{thm:lucas:intro} is a weaker result
but, as we show in Section~\ref{sec:lucas}, it can be proved much more
directly. Moreover, the proof can readily be generalized in other directions.
For instance, in Theorem~\ref{thm:lucasx}, we provide generalized Lucas
congruences for certain sequences expressible as $A (n) = \operatorname{ct} [P
(\boldsymbol{x})^n Q (\boldsymbol{x})]$. It would be interesting to know whether
these sequences satisfy generalized versions of the Dwork congruences
\eqref{eq:dwork}.

We also note that Rowland and Yassawi \cite[Theorem~5.2]{ry-diag13} provide
a rather general result in the spirit of Theorem~\ref{thm:lucas:intro} for
diagonals of certain rational functions.

\section{Lucas congruences for constant terms}\label{sec:lucas}

In the sequel, we will use the vector notation $\boldsymbol{x}= (x_1, \ldots,
x_d)$ and write, for instance, $\mathbb{Z} [\boldsymbol{x}^{\pm 1}] =\mathbb{Z}
[x_1^{\pm 1}, \ldots, x_d^{\pm 1}]$ for the ring of Laurent polynomials in $d$
variables with integer coefficients. We denote monomials as
$\boldsymbol{x}^{\boldsymbol{k}} = x_1^{k_1} \cdots x_d^{k_d}$, where
$\boldsymbol{k}= (k_1, \ldots, k_d)$ is the exponent vector. If $f
(\boldsymbol{x}) = \sum a_{\boldsymbol{k}} \boldsymbol{x}^{\boldsymbol{k}}$ is a
Laurent polynomial, then $\operatorname{supp} (f) \subseteq \mathbb{Z}^d$ denotes the
support of $f$, consisting of those $\boldsymbol{k} \in \mathbb{Z}^d$ for which
$a_{\boldsymbol{k}} \neq 0$. For such $f$, we also use the common notation
$[\boldsymbol{x}^{\boldsymbol{k}}] [f (\boldsymbol{x})] = a_{\boldsymbol{k}}$. The
Newton polytope of $f$ is the convex hull of $\operatorname{supp} (f)$. We further
denote with $\Lambda_p$ the Cartier operator
\begin{equation*}
  \Lambda_p \left[ \sum_{\boldsymbol{k} \in \mathbb{Z}^d} a_{\boldsymbol{k}}
   \boldsymbol{x}^{\boldsymbol{k}} \right] = \sum_{\boldsymbol{k} \in \mathbb{Z}^d}
   a_{p\boldsymbol{k}} \boldsymbol{x}^{\boldsymbol{k}} .
\end{equation*}
In this section, we give a short proof of Theorem~\ref{thm:lucas:intro},
copied below as Theorem~\ref{thm:lucas}, which is instructive for the
generalized Lucas congruences that we consider in Section~\ref{sec:lucasx}. In
the remainder of this section, we then illustrate the versatility of
Theorem~\ref{thm:lucas} by giving several examples.

\begin{theorem}
  \label{thm:lucas}Let $P \in \mathbb{Z} [\boldsymbol{x}^{\pm 1}]$ be such that
  its Newton polytope has the origin as its only interior integral point. Then
  $A (n) = \operatorname{ct} [P (\boldsymbol{x})^n]$ satisfies the Lucas congruences
  \eqref{eq:lucas} for any prime $p$.
\end{theorem}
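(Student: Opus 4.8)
The plan is to work $p$-adically with the Cartier operator $\Lambda_p$, which is the natural tool here because it interacts cleanly both with the constant-term operator and with Frobenius-type reductions modulo $p$. The key algebraic fact I would establish first is the identity
\begin{equation*}
  \operatorname{ct} \bigl[ P(\boldsymbol{x})^{pn+k} \bigr]
  \equiv \operatorname{ct} \bigl[ P(\boldsymbol{x})^{k} \, \Lambda_p \bigl[ P(\boldsymbol{x})^{n} \bigr] \bigr] \pmod{p},
\end{equation*}
which rests on two observations. The first is that, modulo $p$, one has $P(\boldsymbol{x})^{pn} \equiv P(\boldsymbol{x}^p)^{n} \pmod{p}$ by the Frobenius endomorphism (the Freshman's dream applied to the coefficients of $P^n$). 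The second is the ``projection formula'' $\operatorname{ct}\bigl[ g(\boldsymbol{x}^p)\, f(\boldsymbol{x}) \bigr] = \operatorname{ct}\bigl[ g(\boldsymbol{x}) \, \Lambda_p[f(\boldsymbol{x})] \bigr]$, which follows directly from unwinding the definition of $\Lambda_p$. Writing $P^{pn+k} = P^{pn} P^{k} \equiv P(\boldsymbol{x}^p)^n P(\boldsymbol{x})^k \pmod p$ and then applying the projection formula with $g = P^n$ yields the displayed identity.

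With this identity in hand, the proof of the Lucas congruences reduces to controlling the single operator $\Lambda_p$ acting on powers of $P$. Concretely, I would show that the Newton polytope hypothesis forces
\begin{equation*}
  \Lambda_p \bigl[ P(\boldsymbol{x})^{n} \bigr] \equiv P(\boldsymbol{x})^{\lfloor n/p \rfloor} \cdot (\text{something depending only on } n \bmod p) \pmod{p},
\end{equation*}
and more precisely that the constant-term extraction collapses so that the recursion $A(pn+k) \equiv A(k)\, A(n) \pmod p$ holds. This recursion, iterated through the base-$p$ digits of $n$ and combined with $A(0) = \operatorname{ct}[P^0] = \operatorname{ct}[1] = 1$, is exactly equivalent to the Lucas congruences \eqref{eq:lucas}; indeed, by Proposition~\ref{prop:lucas:scheme} it amounts to exhibiting a single-state linear $p$-scheme with $\alpha_{0,0}^{(k)} = A(k)$.

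The main obstacle, and the place where the geometric hypothesis on the Newton polytope must be used, lies in justifying the reduction step just described. The point is that a monomial $\boldsymbol{x}^{\boldsymbol{m}}$ appearing in $P(\boldsymbol{x})^n$ has exponent vector $\boldsymbol{m}$ lying in $n$ times the Newton polytope $N(P)$; after applying $\Lambda_p$ only the exponents divisible by $p$ survive, and I must argue that these correspond bijectively — modulo $p$ and after the Frobenius twist — to monomials of $P^{\lfloor n/p\rfloor}$ times a correction from the low digit. The hypothesis that the origin is the \emph{only} interior integral point of $N(P)$ is precisely what prevents ``stray'' lattice points of $nN(P)$ from surviving $\Lambda_p$ in a way that would spoil the clean multiplicative recursion: it guarantees that the only interior contributions come from the origin, pinning down the constant term. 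I would make this rigorous by a careful lattice-point argument comparing $\operatorname{supp}(P^n)$ with $p \cdot \operatorname{supp}(P^{\lfloor n/p \rfloor})$, showing that any exponent vector surviving $\Lambda_p$ and contributing to the constant term modulo $p$ must factor through the polytope in the expected way. Once this combinatorial geometry is settled, the algebraic identities above assemble immediately into the desired congruence.
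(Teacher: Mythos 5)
Your overall strategy (Frobenius reduction $P^{pn} \equiv P(\boldsymbol{x}^p)^n$, the projection formula for the Cartier operator, then the Newton polytope hypothesis) is the same as the paper's, but as written your argument contains both an error and a gap. The error: your key displayed identity puts $\Lambda_p$ on the wrong factor. Applying your own projection formula to $P(\boldsymbol{x}^p)^n P(\boldsymbol{x})^k$ with $g = P^n$ and $f = P^k$ yields
\begin{equation*}
  A(pn+k) \equiv \operatorname{ct}\bigl[ P(\boldsymbol{x})^n \, \Lambda_p \bigl[ P(\boldsymbol{x})^k \bigr] \bigr] \pmod{p},
\end{equation*}
with the Cartier operator acting on the low-digit factor $P^k$, $0 \le k \le p-1$; your display instead has $\operatorname{ct}[P^k \, \Lambda_p[P^n]]$. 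This is not a harmless typo: running the projection formula in reverse, $\operatorname{ct}[P^k \, \Lambda_p[P^n]] \equiv \operatorname{ct}[P^{pk+n}] \pmod{p}$, so your identity asserts $A(pn+k) \equiv A(pk+n) \pmod{p}$, which is false. For example, for $A(n) = \binom{2n}{n} = \operatorname{ct}[(x^{-1}+2+x)^n]$ (which satisfies the hypothesis of the theorem), taking $p = 5$, $n = 5$, $k = 1$ gives $A(26) \equiv 4$ but $A(10) \equiv 1 \pmod{5}$. Moreover, the rest of your plan is built consistently around this wrong object: your second display and your final paragraph both concern $\Lambda_p[P^n]$ for arbitrary $n$ and propose comparing $\operatorname{supp}(P^n)$ with $p \cdot \operatorname{supp}(P^{\lfloor n/p \rfloor})$, which is a harder (and here unnecessary) statement.

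The gap: the one place where the hypothesis on the Newton polytope does any work is the claim that $\Lambda_p[P^k]$ collapses to a constant for $0 \le k \le p-1$, and you never prove it --- you defer to ``a careful lattice-point argument,'' and the mechanism you sketch (a bijective correspondence of supports preventing ``stray'' lattice points) is not the right one. The needed argument is short and is a dilation argument: if $c\boldsymbol{x}^{\boldsymbol{v}}$, with $c \neq 0$, is a term of $\Lambda_p[P^k]$, then $c\boldsymbol{x}^{p\boldsymbol{v}}$ is a term of $P^k$, so $p\boldsymbol{v} = \lambda_1 \boldsymbol{v}_1 + \cdots + \lambda_t \boldsymbol{v}_t$ with $\boldsymbol{v}_i \in \operatorname{supp}(P)$, $\lambda_i \in \mathbb{Z}_{\geq 0}$ and $\lambda_1 + \cdots + \lambda_t = k$. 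Dividing by $p$, the vector $\boldsymbol{v}$ is a nonnegative combination of points of $\operatorname{supp}(P)$ with coefficient sum $k/p < 1$, hence an interior point of the Newton polytope, hence $\boldsymbol{v} = \boldsymbol{0}$ by hypothesis. Thus $\Lambda_p[P^k] = \operatorname{ct}[P^k] = A(k)$, and the corrected identity gives $A(pn+k) \equiv A(k) \operatorname{ct}[P^n] = A(k) A(n) \pmod{p}$, which iterates to the Lucas congruences exactly as you indicate. (Incidentally, your second display is a true statement --- one can show $\Lambda_p[P^n] \equiv A(n \bmod p) \, P^{\lfloor n/p \rfloor} \pmod{p}$ --- but it is proved via the collapse of $\Lambda_p[P^{n \bmod p}]$ just described, and it cannot rescue your first identity, which is false.)
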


\begin{proof}
  Let $n \geq 0$ and $k \in \{ 0, 1, \ldots, p - 1 \}$. We have
  \begin{eqnarray}
    A (p n + k) & = & \operatorname{ct} [P (\boldsymbol{x})^{p n} P (\boldsymbol{x})^k]
    \nonumber\\
    & \equiv & \operatorname{ct} [P (\boldsymbol{x}^p)^n P (\boldsymbol{x})^k] \pmod{p} \nonumber\\
    & = & \operatorname{ct} [P (\boldsymbol{x})^n \Lambda_p [P (\boldsymbol{x})^k]], 
    \label{eq:lucas:pnk:1}
  \end{eqnarray}
  where we used that $P (\boldsymbol{x})^{p n} \equiv P (\boldsymbol{x}^p)^n$
  modulo $p$. Write $\operatorname{supp} (P) = \{ \boldsymbol{v}_1, \boldsymbol{v}_2,
  \ldots, \boldsymbol{v}_t \}$. In other words, let $\boldsymbol{v}_i$ be the
  exponents of terms of $P (\boldsymbol{x})$. Suppose that
  $c\boldsymbol{x}^{\boldsymbol{v}}$, with $c \neq 0$, is a term of $\Lambda_p [P
  (\boldsymbol{x})^k]$. This is equivalent to $c\boldsymbol{x}^{p\boldsymbol{v}}$
  being a term of $P (\boldsymbol{x})^k$, which implies that
  \begin{equation*}
    p\boldsymbol{v}= \lambda_1 \boldsymbol{v}_1 + \lambda_2 \boldsymbol{v}_2 +
     \cdots + \lambda_t \boldsymbol{v}_t
  \end{equation*}
  where $\lambda_i \in \mathbb{Z}_{\geq 0}$ and $\lambda_1 + \lambda_2 +
  \cdots + \lambda_t = k$. It follows that
  \begin{equation*}
    \boldsymbol{v}= \mu_1 \boldsymbol{v}_1 + \mu_2 \boldsymbol{v}_2 + \cdots +
     \mu_t \boldsymbol{v}_t, \quad \mu_i = \frac{\lambda_i}{p},
  \end{equation*}
  where $\mu_i \geq 0$ and $\mu_1 + \cdots + \mu_t = k / p < 1$, which
  shows that $\boldsymbol{v}$ is an interior point of the Newton polytope of $P
  (\boldsymbol{x})$. By assumption, it must be that $\boldsymbol{v}=\boldsymbol{0}$.
  Consequently, $\Lambda_p [P (\boldsymbol{x})^k] = \operatorname{ct} [P
  (\boldsymbol{x})^k]$. Combined with \eqref{eq:lucas:pnk:1}, we conclude that
  \begin{equation*}
    A (p n + k) \equiv \operatorname{ct} [P (\boldsymbol{x})^n \operatorname{ct} [P
     (\boldsymbol{x})^k]] = \operatorname{ct} [P (\boldsymbol{x})^n] \operatorname{ct} [P
     (\boldsymbol{x})^k] = A (n) A (k) \pmod{p},
  \end{equation*}
  as claimed.
\end{proof}

Since it is particularly convenient to apply in practice, we record the
following special case of Theorem~\ref{thm:lucas}.

\begin{corollary}
  \label{cor:lucas}Let $P \in \mathbb{Z} [\boldsymbol{x}^{\pm 1}]$ and suppose
  that $\operatorname{supp} (P) \subseteq \{ - 1, 0, 1 \}^d$. Then $A (n) = \operatorname{ct}
  [P (\boldsymbol{x})^n]$ satisfies the Lucas congruences \eqref{eq:lucas} for
  any prime $p$.
\end{corollary}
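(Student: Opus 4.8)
The plan is to derive Corollary~\ref{cor:lucas} as a direct special case of Theorem~\ref{thm:lucas}. Since the theorem is already proved, I only need to verify that the hypothesis $\operatorname{supp}(P) \subseteq \{-1,0,1\}^d$ guarantees that the Newton polytope of $P$ has the origin as its only interior integral point. Once that geometric fact is established, the conclusion follows immediately by invoking the theorem.

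First I would observe that the Newton polytope of $P$, being the convex hull of $\operatorname{supp}(P)$, is contained in the cube $[-1,1]^d$. The key point is that the only integral point lying in the \emph{open} cube $(-1,1)^d$ is the origin $\boldsymbol{0}$: any integer vector $\boldsymbol{v}=(v_1,\ldots,v_d)$ with each $|v_i| < 1$ must satisfy $v_i = 0$ for all $i$. Since the interior of the Newton polytope is contained in the interior of $[-1,1]^d$, which is $(-1,1)^d$, any interior integral point of the Newton polytope must be the origin. This shows the origin is the only \emph{candidate} for an interior integral point.

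The one genuine subtlety — which I expect to be the main (though minor) obstacle — is that the theorem requires the origin to actually \emph{be} an interior point, not merely the only integral point that could possibly be interior. If the support of $P$ is too degenerate (for example, if all exponent vectors lie in a proper affine subspace, or if $\boldsymbol{0}$ lies on the boundary of the polytope), then the origin need not be interior and the Newton polytope might have no interior integral points at all. I would address this by noting that the relevant consequence used in the proof of Theorem~\ref{thm:lucas} is precisely that $\Lambda_p[P(\boldsymbol{x})^k] = \operatorname{ct}[P(\boldsymbol{x})^k]$, which requires only that every interior integral point of the Newton polytope equals $\boldsymbol{0}$; the argument there needs no lower bound asserting that $\boldsymbol{0}$ genuinely lies in the interior. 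Thus the weaker conclusion ``the origin is the only interior integral point'' — interpreted as ``there is no interior integral point other than the origin'' — suffices, and this is exactly what the containment $\operatorname{supp}(P) \subseteq \{-1,0,1\}^d$ delivers.

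Putting these together, the proof would read: by hypothesis the Newton polytope of $P$ lies in $[-1,1]^d$, so any interior integral point lies in $(-1,1)^d \cap \mathbb{Z}^d = \{\boldsymbol{0}\}$; hence the origin is the only interior integral point, and Theorem~\ref{thm:lucas} applies to conclude that $A(n) = \operatorname{ct}[P(\boldsymbol{x})^n]$ satisfies the Lucas congruences for any prime $p$.
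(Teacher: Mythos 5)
Your overall route is the paper's: the corollary is recorded there precisely as an immediate special case of Theorem~\ref{thm:lucas}, justified exactly by your first paragraph's observation that the Newton polytope lies in $[-1,1]^d$, so the only integral point that could possibly be interior is the origin. You are also right to flag the degeneracy issue, which the paper passes over in silence: if, say, $\operatorname{supp}(P) \subseteq \{0,1\}^d$ or $\operatorname{supp}(P)$ lies in a proper affine subspace, then the origin need not be an interior point of the Newton polytope at all, and under the strict reading of its hypothesis Theorem~\ref{thm:lucas} does not literally apply.

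The gap is in your resolution of that issue. You claim that the proof of Theorem~\ref{thm:lucas} delivers $\Lambda_p[P(\boldsymbol{x})^k] = \operatorname{ct}[P(\boldsymbol{x})^k]$ using only the implication ``interior integral point $\Rightarrow$ origin'', with no need for the origin to genuinely be interior. That is not so. The step of that proof which asserts that $\boldsymbol{v} = \mu_1\boldsymbol{v}_1 + \cdots + \mu_t\boldsymbol{v}_t$ with $\mu_i \geq 0$ and $s := \mu_1 + \cdots + \mu_t < 1$ is an \emph{interior} point is exactly where interiority of the origin gets used: one writes $\boldsymbol{v}$ as the strict convex combination $s\,\boldsymbol{w} + (1-s)\,\boldsymbol{0}$ of the polytope point $\boldsymbol{w} = \sum_i (\mu_i/s)\boldsymbol{v}_i$ and the interior point $\boldsymbol{0}$. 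Without the origin being interior, both that step and the conclusion can fail: for $P(x) = x^2$, $p = 2$, $k = 1$, every interior integral point of the Newton polytope equals the origin (vacuously, since the polytope $\{2\}$ has no interior points), yet $\Lambda_2[x^2] = x \neq 0 = \operatorname{ct}[x^2]$. Of course this $P$ violates the corollary's support condition; the point is that your stated justification is a claim about the proof of Theorem~\ref{thm:lucas} for general $P$, and that claim is false, so it cannot be what rescues the degenerate cases. The correct repair is the direct exponent bound, which is exactly what the paper uses in the two-variable setting of Theorem~\ref{thm:lucasx}: when $\operatorname{supp}(P) \subseteq \{-1,0,1\}^d$, every exponent vector of a term of $P^k$ lies in $[-k,k]^d$; a term of $\Lambda_p[P^k]$ has exponent $\boldsymbol{v}$ with $p\boldsymbol{v}$ such an exponent, so $|p v_i| \leq k \leq p-1 < p$ forces $\boldsymbol{v} = \boldsymbol{0}$. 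Hence $\Lambda_p[P^k] = \operatorname{ct}[P^k]$ for all $0 \leq k \leq p-1$, and the remainder of the proof of Theorem~\ref{thm:lucas} runs verbatim, with no reference to Newton polytopes needed.
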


We note that the case $d = 2$ can also be obtained as a special case of
Theorem~\ref{thm:lucasx} offered in the next section.

\begin{example}
  It is immediate from Corollary~\ref{cor:lucas} that the central binomial
  coefficients
  \begin{equation}
    \binom{2 n}{n} = \operatorname{ct} \left[ \frac{(1 + x)^{2 n}}{x^n} \right] =
    \operatorname{ct} \left[ \left(\frac{1}{x} + 2 + x \right)^n \right]
    \label{eq:central}
  \end{equation}
  satisfy the Lucas congruences modulo any prime. In the same manner, it
  follows from Corollary~\ref{cor:lucas} that the central trinomial
  coefficients
  \begin{equation*}
    T (n) = \operatorname{ct} [(x^{- 1} + 1 + x)^n]
  \end{equation*}
  satisfy the Lucas congruences as well. This is \cite[Theorem~4.7]{ds-cong}
  and, as recorded in Corollary~\ref{cor:lucas:trinomial}, the result extends
  directly to the generalized trinomial coefficients.
\end{example}

\begin{example}
  The sequences
  \begin{equation*}
    A_s (n) = \sum_{k_1 + \cdots + k_s = n} \binom{n}{k_1, \ldots, k_s}^2
  \end{equation*}
  count abelian squares \cite{richmond-absq09}, which are strings of length
  $2 n$ where the second half is a permutation of the first, over an alphabet
  with $s$ letters. The numbers $A_s (n)$ are also the $2 n$-th moments of the
  distance travelled by an $s$-step random walk in the plane
  \cite{bnsw-randomwalkintegrals}, where each step is of unit length and
  taken in a uniformly chosen random direction. As observed in
  \cite[(8)]{bnsw-randomwalkintegrals}, we have the constant term
  representation
  \begin{eqnarray*}
    A_s (n) & = & \operatorname{ct} [((x_1 + \cdots + x_s) (x_1^{- 1} + \cdots +
    x_s^{- 1}))^n]\\
    & = & \operatorname{ct} [((1 + x_1 + \cdots + x_{s - 1}) (1 + x_1^{- 1} + \cdots
    + x_{s - 1}^{- 1}))^n] .
  \end{eqnarray*}
  Applying Corollary~\ref{cor:lucas} to these constant terms, we are able to
  conclude that, for each positive integer $s$, the sequence $A_s (n)$
  satisfies the Lucas congruences modulo any prime.
\end{example}

\begin{example}
  \label{eg:apery}Based on the binomial sum representation
  \begin{equation}
    A (n) = \sum_{k = 0}^n \binom{n}{k}^2 \binom{n + k}{k}^2,
    \label{eq:apery3}
  \end{equation}
  Gessel \cite[Theorem 1]{gessel-super} proved that the Ap\'ery numbers $A
  (n)$ satisfy the Lucas congruences. This result can also be obtained as an
  immediate consequence of Corollary~\ref{cor:lucas} in light of the constant
  term representation \cite[Remark~1.4]{s-apery}
  \begin{equation*}
    A (n) = \operatorname{ct} \left[ \frac{(x + y) (z + 1) (x + y + z) (y + z + 1)}{x
     y z} \right]^n .
  \end{equation*}
\end{example}

We note that the Ap\'ery numbers \eqref{eq:apery3} were introduced by
R.~Ap\'ery in his surprising proof \cite{apery}, \cite{alf} of the
irrationality of $\zeta (3)$. One of their, at the time unexpected, properties
is that they satisfy a certain type of three-term recurrence. It remains an
open problem to classify the integer sequences which satisfy recurrences of
this shape. It is believed that, essentially, there are only finitely many
such sequences and, presently, $15$ such sporadic Ap\'ery-like sequences
have been found by Zagier \cite{zagier4}, Almkvist, Zudilin \cite{az-de06}
and Cooper \cite{cooper-sporadic} in extensive computer searches. Malik and
the second author \cite{ms-lucascongruences} proved that all of these $15$
sequences satisfy Lucas congruences. In $13$ of these cases, they were able to
follow McIntosh's approach \cite{mcintosh-lucas} of establishing Lucas
congruences based on suitable representations as binomials sums. On the other
hand, considerably more analysis was needed to handle the remaining two cases
(labelled $(\eta)$ and $s_{18}$). More recently, Gorodetsky
\cite{gorodetsky-ct} was able to simplify the proof of the Lucas congruences
by obtaining suitable constant term representations for each Ap\'ery-like
sporadic sequences. In $14$ cases (all except $(\eta)$), these constant term
expressions are of the form $A (n) = \operatorname{ct} [P (\boldsymbol{x})^n]$ where the
Newton polytope of $P (\boldsymbol{x})$ has the origin as its only interior
integral point. Using Theorem~\ref{thm:lucas}, it therefore follows that these
$14$ sequences satisfy the Lucas congruences.

We conclude this section by considering the sequence
\begin{equation*}
  S (n) = \sum_{k = 0}^n \binom{n}{k}^2 \binom{n - k}{k} .
\end{equation*}
Presumably based on numerical values, Z.-W.~Sun \cite[A275027]{oeis}
conjectured that $S (n) \equiv 0, \pm 1 \pmod{5}$. As
another application of Theorem~\ref{thm:lucas}, we prove this claim by showing
that the sequence $S (n)$ satisfies Lucas congruences.

\begin{lemma}
  For all $n \in \mathbb{Z}_{\geq 0}$, we have, modulo $5$,
  \begin{equation*}
    S (n) \equiv \left\{\begin{array}{ll}
       (- 1)^{d (n)}, & \text{if the digits of $n$ in base $5$ are all $0, 1,
       3$,}\\
       0, & \operatorname{otherwise},
     \end{array}\right.
  \end{equation*}
  where $d (n)$ is the number of digits of $n$ in base $5$ that are equal to
  $3$.
\end{lemma}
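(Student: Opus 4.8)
The plan is to show that $S(n)$ satisfies the Lucas congruences modulo $5$ by producing a constant term representation to which Corollary~\ref{cor:lucas} applies, and then to read the explicit formula off the base-$5$ digits. The crux is the first step: finding a Laurent polynomial $P(x,y)$ with $S(n) = \operatorname{ct}[P(x,y)^n]$. I would start from the identity $\binom{n}{k}^2 \binom{n-k}{k} = \binom{n}{k} \binom{n}{k,k,n-2k}$, in which the trinomial coefficient is the diagonal coefficient $[x^k y^k] (1+x+y)^n = \operatorname{ct}[(1+x+y)^n x^{-k} y^{-k}]$. Summing this against $\binom{n}{k}$ and collapsing the single sum over $k$ via $\sum_k \binom{n}{k} (xy)^{-k} = (1 + (xy)^{-1})^n$, one is led to
\[
  S(n) = \operatorname{ct}\left[ \left( \frac{(1+x+y)(1+xy)}{xy} \right)^n \right], \qquad P(x,y) = x + y + 1 + x^{-1} + y^{-1} + x^{-1} y^{-1}.
\]
Following Remark~\ref{rk:ct}, this representation can alternatively be established directly by creative telescoping, which sidesteps the combinatorial manipulation entirely.

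With this in hand, the rest is routine. Since $\operatorname{supp}(P) = \{(1,0),(0,1),(0,0),(-1,0),(0,-1),(-1,-1)\} \subseteq \{-1,0,1\}^2$, Corollary~\ref{cor:lucas} applies immediately and shows that $S(n)$ satisfies the Lucas congruences modulo every prime, in particular modulo $5$. A direct evaluation of the sum gives the five base cases $S(0), S(1), S(2), S(3), S(4) \equiv 1, 1, 0, -1, 0 \pmod{5}$.

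It then only remains to translate the congruence \eqref{eq:lucas}. Writing $n = n_0 + n_1 p + \cdots + n_r p^r$ in base $5$, the Lucas congruences give $S(n) \equiv S(n_0) \cdots S(n_r) \pmod{5}$. Because $S(2) \equiv S(4) \equiv 0$, any $n$ possessing a base-$5$ digit equal to $2$ or $4$ satisfies $S(n) \equiv 0$; otherwise every digit lies in $\{0,1,3\}$, and since $S(0) = S(1) = 1$ while $S(3) \equiv -1$, the product equals $(-1)^{d(n)}$, where $d(n)$ is the number of digits equal to $3$. This is exactly the claimed formula, and in particular confirms Sun's observation that $S(n) \equiv 0, \pm 1 \pmod{5}$. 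The only genuine obstacle is guessing or deriving the polynomial $P$ above; once the constant term representation is secured, the argument reduces to invoking the already-proved Corollary~\ref{cor:lucas} and checking five residues.
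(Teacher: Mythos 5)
Your proposal is correct and follows essentially the same route as the paper: obtain a constant term representation $S(n) = \operatorname{ct}[P(x,y)^n]$ with $\operatorname{supp}(P) \subseteq \{-1,0,1\}^2$, invoke Corollary~\ref{cor:lucas} to get the Lucas congruences modulo every prime, and finish by checking $S(0), \ldots, S(4) \equiv 1, 1, 0, -1, 0 \pmod{5}$. The only difference is cosmetic: the paper derives $P = (1+x)\left(1+y+\tfrac{1}{xy}\right)$ by the Rowland--Zeilberger procedure, while your $P = (1+x+y)\left(1+\tfrac{1}{xy}\right)$, obtained via the multinomial identity, is the image of the paper's polynomial under the constant-term-preserving unimodular substitution $x \mapsto \tfrac{1}{xy}$, so the two representations are equally valid.
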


\begin{proof}
  We first express $S (n)$ as constant terms by following the procedure
  outlined in \cite{rz-cong} for converting certain binomial sums to
  constant terms:
  \begin{eqnarray*}
    S (n) = \sum_{k = 0}^n \binom{n}{k}^2 \binom{n - k}{k} & = & \operatorname{ct}
    \left[ \sum_{k = 0}^n \binom{n}{k} \frac{(1 + x)^n}{x^k}  \frac{(1 + y)^{n
    - k}}{y^k} \right]\\
    & = & \operatorname{ct} \left[ (1 + x)^n (1 + y)^n \left(1 + \frac{1}{x y (1 +
    y)} \right)^n \right]\\
    & = & \operatorname{ct} \left[ (1 + x)^n \left(1 + y + \frac{1}{x y} \right)^n
    \right]
  \end{eqnarray*}
  It follows immediately from this expression and Corollary~\ref{cor:lucas}
  that $S (n)$ satisfies the Lucas congruences \eqref{eq:lucas} modulo any
  prime.
  
  The claim then follows from the initial values of $S (n)$ modulo $5$:
  \begin{equation*}
    S (0) \equiv 1, \quad S (1) \equiv 1, \quad S (2) \equiv 0, \quad S (3)
     \equiv - 1, \quad S (4) \equiv 0 \pmod{5}
  \end{equation*}
\end{proof}

In \cite[A275027]{oeis} it is further observed that $S (n)$ is always odd
and that this can be seen from the alternative binomial sum
\begin{equation*}
  S (n) = \sum_{k = 0}^n \binom{n}{k} \binom{n}{2 k} \binom{2 k}{k},
\end{equation*}
combined with the fact that $\binom{2 k}{k} = 2 \binom{2 k - 1}{k - 1}$ for $k
= 1, 2, \ldots$ We note that such statements are particularly easy to
understand in terms of Lucas congruences. In this instance, it follows from $S
(0) = S (1) = 1$, together with the Lucas congruences modulo $2$, that $S (n)$
is always odd. More generally, it follows from the Lucas congruences that $S
(n)$ is never divisible by a prime $p$ if none of the values $S (0), S (1),
\ldots, S (p - 1)$ is divisible by $p$. By direct computation, this allows us
to conclude that $S (n)$ is never divisible by the following primes:
\begin{equation*}
  2, 3, 7, 11, 31, 41, 67, 73, 79, 89, 97, \ldots
\end{equation*}
It would be interesting, but appears to be much more difficult, to explicitly
characterize those primes.

\section{Generalized central trinomial numbers}\label{sec:trinomial}

Noe \cite{noe-trinomial} studied the generalized central trinomial numbers
\begin{equation*}
  T (n) = \operatorname{ct} [(a x^{- 1} + b + c x)^n] = \sum_{k = 0}^{\lfloor n / 2
   \rfloor} \binom{n}{2 k} \binom{2 k}{k} (a c)^k b^{n - 2 k},
\end{equation*}
which generalize the classical case $a = b = c = 1$ already considered by
Euler. As another immediate application of Corollary~\ref{cor:lucas}, we
obtain the following result, which is also proved by Noe
\cite[(13)]{noe-trinomial} using a congruence of Schur for Legendre
polynomials, and by Deutsch and Sagan \cite[Theorem~4.7]{ds-cong} in the
case $a = b = c = 1$.

\begin{corollary}
  \label{cor:lucas:trinomial}The generalized central trinomial numbers $T (n)$
  satisfy the Lucas congruences \eqref{eq:lucas} for any prime $p$.
\end{corollary}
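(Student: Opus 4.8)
The plan is to apply Corollary~\ref{cor:lucas} directly, since the generalized central trinomial numbers are by definition the constant terms $T(n) = \operatorname{ct}[P(x)^n]$ of the single-variable Laurent polynomial $P(x) = a x^{-1} + b + c x$. Assuming $a, b, c \in \mathbb{Z}$, this $P$ lies in $\mathbb{Z}[x^{\pm 1}]$, and each of its monomials is among $x^{-1}$, $1$, and $x$, so $\operatorname{supp}(P) \subseteq \{-1, 0, 1\} = \{-1, 0, 1\}^1$. This is precisely the hypothesis of Corollary~\ref{cor:lucas} in the case $d = 1$.

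First I would verify the support condition, which holds no matter which (if any) of the coefficients $a, b, c$ vanish, since dropping a term only shrinks the support. With the condition confirmed, Corollary~\ref{cor:lucas} immediately yields that $T(n)$ satisfies the Lucas congruences \eqref{eq:lucas} for every prime $p$. One could alternatively invoke Theorem~\ref{thm:lucas} directly, observing that the Newton polytope of $P(x)$ is the segment $[-1, 1]$ with unique interior integral point the origin; routing the argument through Corollary~\ref{cor:lucas} is preferable, however, because it gracefully covers the degenerate cases (for example $a = 0$, where the Newton polytope collapses and the origin ceases to be an interior point) without any special treatment.

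There is essentially no obstacle here: the result is an immediate specialization of Corollary~\ref{cor:lucas}. The only point worth recording is that the normalization $T(0) = \operatorname{ct}[1] = 1$ demanded by the statement of the Lucas congruences is automatically built into any constant term of the form $\operatorname{ct}[P(x)^n]$, so no separate check of the initial condition is needed, and the proof reduces to the single observation about $\operatorname{supp}(P)$ above.
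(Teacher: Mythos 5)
Your proposal is correct and takes essentially the same route as the paper: the paper obtains this corollary as an immediate application of Corollary~\ref{cor:lucas} to $T(n) = \operatorname{ct}[(a x^{-1} + b + c x)^n]$, whose support lies in $\{-1,0,1\}$, i.e.\ the case $d=1$. Your extra remarks (degenerate coefficients, the normalization $T(0)=1$) are sound but not needed beyond that single observation.
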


Among further divisibility properties, Noe \cite[Theorem~8.8]{noe-trinomial}
determines $T (p - 1)$ modulo $p$ as follows. For $d \in \mathbb{Z}$, let $(d
/ p)$ denote the Kronecker symbol so that, for odd primes $p$, we have $(d /
p) \equiv d^{(p - 1) / 2} \pmod{p}$ while, for $p = 2$, we
simply have $(d / 2) \equiv d \pmod{2}$.

\begin{lemma}
  \label{lem:trinomial:pm1}For all primes $p$ and integers $a, b, c$,
  \begin{equation*}
    \operatorname{ct} [(a x^{- 1} + b + c x)^{p - 1}] \equiv \left(\frac{b^2 - 4 a
     c}{p} \right) \pmod{p} .
  \end{equation*}
\end{lemma}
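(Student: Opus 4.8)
The plan is to argue directly from the explicit binomial-sum formula for $T(n)$ recalled at the start of this section, specialized to $n = p-1$, and to reduce the whole computation to Euler's criterion. For odd $p$, I would begin with
\[
  T(p-1) = \operatorname{ct}[(ax^{-1}+b+cx)^{p-1}] = \sum_{k=0}^{(p-1)/2} \binom{p-1}{2k}\binom{2k}{k}(ac)^k b^{p-1-2k},
\]
and apply the elementary congruence $\binom{p-1}{j}\equiv(-1)^j \pmod{p}$. With $j=2k$ this gives $\binom{p-1}{2k}\equiv 1$, so modulo $p$ the sum collapses to $\sum_{k}\binom{2k}{k}(ac)^k b^{p-1-2k}$, a truncated hypergeometric-type sum that I now want to evaluate in closed form.

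The key step is to rewrite the central binomial coefficient. Using the identity $\binom{2k}{k}=(-4)^k\binom{-1/2}{k}$, the congruence $-1/2\equiv(p-1)/2\pmod{p}$, and the observation that $\binom{\alpha}{k}$ is, for $k<p$, a polynomial in $\alpha$ whose coefficients are $p$-integral (since $k!$ is then invertible modulo $p$), I obtain $\binom{2k}{k}\equiv(-4)^k\binom{(p-1)/2}{k}\pmod{p}$ for all $0\le k\le(p-1)/2$. Substituting this and writing $b^{p-1-2k}=(b^2)^{(p-1)/2-k}$ turns the sum into $\sum_{k=0}^{(p-1)/2}\binom{(p-1)/2}{k}(-4ac)^k(b^2)^{(p-1)/2-k}$, which by the binomial theorem is exactly $(b^2-4ac)^{(p-1)/2}$. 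Euler's criterion $(b^2-4ac)^{(p-1)/2}\equiv\left(\frac{b^2-4ac}{p}\right)\pmod{p}$ then finishes the odd-$p$ case. The prime $p=2$ I would treat separately and directly: here $T(1)=\operatorname{ct}[ax^{-1}+b+cx]=b$, while $(b^2-4ac)/2\equiv b^2\equiv b\pmod{2}$, so the two sides agree.

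The two substeps that demand the most care are the congruence $\binom{2k}{k}\equiv(-4)^k\binom{(p-1)/2}{k}$, which relies on $k\le(p-1)/2<p$ so that $k!$ is a unit, and the recognition that the resulting truncated sum is a genuine binomial expansion. The pleasant point—and the reason I prefer this route over, say, a finite-field root-counting argument for $\sum_{x\in\mathbb{F}_p}(cx^2+bx+a)^{p-1}$—is that once the summand is written with $(b^2)^{(p-1)/2-k}$, the binomial theorem applies as a polynomial identity in $a,b,c$, so no division by $b$ and no separate analysis of the degenerate cases $b\equiv 0$ or $c\equiv 0$ is required. I therefore expect the only real obstacle to be bookkeeping: verifying the binomial-coefficient congruence cleanly and confirming the $p=2$ boundary case against the Kronecker-symbol convention stated just before the lemma.
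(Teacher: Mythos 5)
Your proof is correct, but it takes a genuinely different route from the paper's. The paper handles the odd-prime case by citation: it invokes Noe's Theorem~8.8, which rests on the reflection congruences $T(p-k-1) \equiv d^{(p-1)/2-k}\,T(k) \pmod{p}$, $d = b^2-4ac$, deduced from Holt's congruences $P_{p-k-1}(x)\equiv P_k(x)$ for the associated Legendre polynomials; only the case $p=2$ is checked directly, exactly as you do. Your argument instead evaluates $T(p-1)$ from scratch: the reduction $\binom{p-1}{2k}\equiv 1 \pmod{p}$, the congruence $\binom{2k}{k}\equiv(-4)^k\binom{(p-1)/2}{k} \pmod{p}$ (valid because $k\le (p-1)/2 < p$ makes $k!$ invertible modulo $p$), and the binomial theorem collapse the sum to $(b^2-4ac)^{(p-1)/2}$, after which Euler's criterion finishes the job; note that in the Kronecker-symbol convention stated just before the lemma, the congruence $(d/p)\equiv d^{(p-1)/2}$ also covers $p \mid b^2-4ac$, so no case distinction is needed there either. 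What your route buys is self-containedness: no appeal to the Legendre-polynomial literature, and, as you observe, writing $b^{p-1-2k}=(b^2)^{(p-1)/2-k}$ keeps the whole computation a polynomial congruence in $a,b,c$, so the degenerate cases $p\mid b$, $p\mid c$, or $p \mid b^2-4ac$ require no separate treatment. What the paper's route buys is brevity and a pointer to the structural source of the lemma (Schur/Holt-type symmetries), which in fact yields the stronger family of congruences $T(p-k-1)\equiv d^{(p-1)/2-k}\,T(k)$ for all $k$, rather than only the single value $T(p-1)$.
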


\begin{proof}
  For odd primes, Noe \cite[Theorem~8.8]{noe-trinomial} deduces this result
  from the congruences
  \begin{equation*}
    T (p - k - 1) \equiv d^{(p - 1) / 2 - k} T (k) \pmod{p},
     \quad d = b^2 - 4 a c,
  \end{equation*}
  for the generalized trinomial coefficients, which in turn follow from the
  congruences $P_{p - k - 1} (x) \equiv P_k (x)$ for the corresponding
  Legendre polynomials due to Holt.
  
  On the other hand, for $p = 2$, we have $\operatorname{ct} [(a x^{- 1} + b + c x)^{p
  - 1}] = b$ as well as $b \equiv b^2 \equiv d \equiv (d / p) \pmod{2}$ so that the result is trivially true.
\end{proof}

We will also need the following variation.

\begin{lemma}
  \label{lem:trinomial:pm1x}If $p$ is an odd prime and $c \nequiv 0
  \pmod{p}$, then
  \begin{equation*}
    \operatorname{ct} [(a x^{- 1} + b + c x)^{p - 1} x] \equiv \frac{b}{2 c} \left(1
     - \left(\frac{b^2 - 4 a c}{p} \right) \right) \pmod{p} .
  \end{equation*}
  If $p = 2$ or $c \equiv 0 \pmod{p}$, then we have this
  congruence with the right-hand replaced by $- a b^{p - 2}$ (which is
  understood to be $a$ if $p = 2$).
\end{lemma}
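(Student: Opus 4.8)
The plan is to convert the constant term into a single coefficient of an ordinary polynomial and then reduce everything to the power series expansion of $1/(a+bx+cx^2)$ modulo $p$. Writing $q(x)=a+bx+cx^2$, we have $ax^{-1}+b+cx=x^{-1}q(x)$, so $(ax^{-1}+b+cx)^{p-1}x=x^{2-p}q(x)^{p-1}$ and therefore $\operatorname{ct}[(ax^{-1}+b+cx)^{p-1}x]=[x^{p-2}]q(x)^{p-1}$. It is worth noting the companion identity $\operatorname{ct}[(ax^{-1}+b+cx)^{p-1}]=[x^{p-1}]q(x)^{p-1}$, which is exactly Lemma~\ref{lem:trinomial:pm1}; the argument below reproves it simultaneously, providing a useful consistency check.

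The key step is the Frobenius relation $q(x)^p\equiv q(x^p)\pmod p$, which follows from the multinomial theorem together with Fermat's little theorem applied to the integer coefficients $a,b,c$. Since $q(x^p)=a+bx^p+cx^{2p}\equiv a\pmod{x^p}$, reducing the polynomial identity $q(x)\cdot q(x)^{p-1}=q(x)^p$ modulo $(p,x^p)$ shows that, as power series in $\mathbb{F}_p[[x]]$ and provided $a\not\equiv0$, one has $q(x)^{p-1}\equiv a\,q(x)^{-1}\pmod{x^p}$. Hence for $0\le n\le p-1$ the coefficient $[x^n]q(x)^{p-1}$ equals $a\,w_n$, where $w_n:=[x^n]\,1/(a+bx+cx^2)$, and in particular the quantity we seek is $\equiv a\,w_{p-2}\pmod p$.

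To compute $w_{p-2}$ I would use partial fractions over $\overline{\mathbb{F}_p}$: writing $q(x)=c(x-\alpha)(x-\beta)$ gives $w_n=\frac{\beta^{-n-1}-\alpha^{-n-1}}{c(\alpha-\beta)}$. Because $\alpha\beta=a/c\in\mathbb{F}_p^{\times}$ satisfies $(\alpha\beta)^{p-1}=1$, the expression for $w_{p-2}$ collapses to $\frac{\alpha^{p-1}-\beta^{p-1}}{c(\alpha-\beta)}$. Now I would split on the discriminant $D=b^2-4ac=c^2(\alpha-\beta)^2$, noting $(D/p)$ equals the Legendre symbol of $(\alpha-\beta)^2$ up to the square $c^2$. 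If $(D/p)=1$ then $\alpha,\beta\in\mathbb{F}_p^{\times}$ and Fermat forces $\alpha^{p-1}=\beta^{p-1}=1$, so $w_{p-2}=0$. If $(D/p)=-1$ then $q$ is irreducible, $\beta=\alpha^p$, and a short computation using $\alpha^{p^2}=\alpha$ yields $\frac{\alpha^{p-1}-\beta^{p-1}}{\alpha-\beta}=-\frac{\alpha+\beta}{\alpha\beta}=\frac{b}{a}$, whence $a\,w_{p-2}=b/c$. If $D\equiv0$ the double root $\alpha=-b/(2c)\in\mathbb{F}_p$ gives $w_{p-2}=(p-1)\alpha^{-p}/c\equiv 2/b$, so $a\,w_{p-2}=2a/b\equiv b/(2c)$ since $b^2\equiv4ac$. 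In all three cases the value agrees with the single formula $\tfrac{b}{2c}\bigl(1-(D/p)\bigr)$, which is the claim.

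It remains to dispose of the degenerate inputs, which I expect to be the main bookkeeping obstacle alongside the $(D/p)=-1$ computation in $\mathbb{F}_{p^2}$. When $a\equiv0$ (still $c\not\equiv0$) the series $1/q$ does not exist, but one reads off directly from $q(x)q(x)^{p-1}\equiv a\equiv0\pmod{(p,x^p)}$ and $q(x)=x(b+cx)$ that $[x^{p-2}]q(x)^{p-1}=0$, matching the formula since then $(D/p)=(b^2/p)\in\{0,1\}$ forces the right-hand side to vanish. For $c\equiv0$ with $p$ odd, $q=a+bx$ is linear and $[x^{p-2}](a+bx)^{p-1}=\binom{p-1}{p-2}a\,b^{p-2}\equiv -a\,b^{p-2}$, while for $p=2$ one computes $\operatorname{ct}[(ax^{-1}+b+cx)x]=a\equiv -a\pmod 2$; both agree with the stated alternative value $-a\,b^{p-2}$. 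The principal care needed is in verifying that the uniform closed form genuinely captures all three Legendre cases together with these boundary subcases.
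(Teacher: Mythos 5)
Your proof is correct, but it takes a genuinely different route from the paper's. The paper stays at the level of the trinomial sequence $T(n)$: using the substitution $x \mapsto a x^{-1}/c$ it derives the identity $\operatorname{ct}[(a x^{-1} + b + c x)^n x] = \tfrac{1}{2c}\left(T(n+1) - b\,T(n)\right)$, then sets $n = p-1$ and quotes two earlier results --- Lemma~\ref{lem:trinomial:pm1} for $T(p-1) \equiv \left(\tfrac{b^2-4ac}{p}\right)$ and the Lucas congruences of Corollary~\ref{cor:lucas:trinomial} to evaluate $T(p)$ --- to conclude, with the same direct computations as yours for the degenerate cases $p=2$ and $c \equiv 0$. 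You instead convert the constant term to $[x^{p-2}]\,q(x)^{p-1}$ with $q(x) = a + bx + cx^2$, use Frobenius to get $q^{p-1} \equiv a\,q^{-1} \pmod{(p, x^p)}$, and evaluate the power-series coefficient by partial fractions over $\overline{\mathbb{F}_p}$, splitting on the discriminant. What your route buys is self-containedness: it needs neither the Legendre-polynomial input behind Lemma~\ref{lem:trinomial:pm1} (Noe, via Holt's congruence) nor the Lucas congruences for $T(n)$, and, as you note, it reproves Lemma~\ref{lem:trinomial:pm1} by the same computation. The cost is the three-way case analysis (split, inert, ramified) in $\mathbb{F}_{p^2}$ plus the separate treatment of $a \equiv 0$, all of which you handle correctly --- including the necessary switch to the double-root expansion when $b^2 - 4ac \equiv 0$, where the distinct-root formula degenerates. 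Incidentally, your argument sidesteps a small slip in the paper's own proof: it asserts $T(p) \equiv 1 \pmod{p}$, whereas the Lucas congruences give $T(p) \equiv T(1)\,T(0) = b$, which is what the computation $\tfrac{1}{2c}\left(T(p) - b\,T(p-1)\right) \equiv \tfrac{b}{2c}\left(1 - \left(\tfrac{b^2-4ac}{p}\right)\right)$ actually requires.
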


\begin{proof}
  Note that
  \begin{equation*}
    \operatorname{ct} [(a x^{- 1} + b + c x)^n a x^{- 1}] = \operatorname{ct} [(c x + b + a
     x^{- 1})^n c x]
  \end{equation*}
  upon replacing $x$ by $a x^{- 1} / c$ (which does not affect the value of
  the constant term). Consequently,
  \begin{eqnarray*}
    T (n + 1) & = & \operatorname{ct} [(a x^{- 1} + b + c x)^n (a x^{- 1} + b + c
    x)]\\
    & = & b T (n) + 2 \operatorname{ct} [(a x^{- 1} + b + c x)^n x]
  \end{eqnarray*}
  so that, if $p \neq 2$ and $c \nequiv 0 \pmod{p}$,
  \begin{equation}
    \operatorname{ct} [(a x^{- 1} + b + c x)^n x] = \frac{1}{2 c} (T (n + 1) - b T
    (n)) . \label{eq:trinomial:x}
  \end{equation}
  In this case, the claim therefore follows from Lemma~\ref{lem:trinomial:pm1}
  combined with $T (p) \equiv 1 \pmod{p}$, which is a
  consequence of the fact that $T (n)$ satisfies the Lucas congruences, see
  Corollary~\ref{cor:lucas:trinomial}.
  
  The case $p = 2$ is trivial because $\operatorname{ct} [(a x^{- 1} + b + c x) x] =
  a$. On the other hand, if $c \equiv 0 \pmod{p}$, then
  $\operatorname{ct} [(a x^{- 1} + b + c x)^{p - 1} x] = (p - 1) a b^{p - 2} \equiv -
  a b^{p - 2} \pmod{p}$.
\end{proof}

\section{Generalized Lucas congruences}\label{sec:lucasx}

The following result, Theorem~\ref{thm:lucasx}, is the main technical result
of this paper and provides generalized Lucas congruences \eqref{eq:lucasx} for
certain constant terms $A (n) = \operatorname{ct} [P (x, y)^n Q (x, y)]$. The result
is somewhat involved to state in full generality but a simpler special case is
spelled out in Corollary~\ref{cor:lucasx:simpl} below. Note that it follows
from Corollary~\ref{cor:lucas} that the sequence $B (n)$ in
Theorem~\ref{thm:lucasx} satisfies the ordinary Lucas congruences
\eqref{eq:lucas}, while the sequence $\tilde{A} (n)$ in \eqref{eq:lucasx} is
such that Theorem~\ref{thm:lucasx} again applies to provide generalized Lucas
congruences \eqref{eq:lucasx} (with the same values for $\sigma_x$, $\sigma_y$
and $\hat{Q} (x, y)$). As a result, the congruences \eqref{eq:lucasx} are
sufficient to determine all values of $A (n)$ modulo any prime $p$ (from the
first $p$ values of each of the involved sequences).

\begin{theorem}
  \label{thm:lucasx}Let $A (n) = \operatorname{ct} [P (x, y)^n Q (x, y)]$ where $P, Q
  \in \mathbb{Z} [x^{\pm 1}, y^{\pm 1}]$ with
  \begin{equation*}
    P (x, y) = \sum_{(i, j) \in \{ - 1, 0, 1 \}^2} a_{i, j} x^i y^j, \quad Q
     (x, y) = \alpha + \beta x + \gamma y + \delta x y.
  \end{equation*}
  Then, for any $n \in \mathbb{Z}_{\geq 0}$ and $k \in \{ 0, 1, \ldots, p
  - 1 \}$,
  \begin{equation}
    A (p n + k) \equiv B (n) A (k) + \left\{\begin{array}{ll}
      0, & \text{if $k < p - 1$},\\
      \tilde{A} (n), & \text{if $k = p - 1$},
    \end{array}\right. \pmod{p} . \label{eq:lucasx}
  \end{equation}
  Here, $B (n) = \operatorname{ct} [P (x, y)^n]$ and $\tilde{A} (n) = \operatorname{ct} [P (x,
  y)^n \tilde{Q} (x, y)]$ with
  \begin{equation*}
    \tilde{Q} (x, y) = Q (\sigma_x x, \sigma_y y) - \alpha + \delta \hat{Q}
     (x, y),
  \end{equation*}
  where the quantities $\sigma_x, \sigma_y \in \{ 0, \pm 1 \}$ are given by
  \begin{equation}
    \sigma_x = \left(\frac{a_{1, 0}^2 - 4 a_{1, - 1} a_{1, 1}}{p} \right),
    \quad \sigma_y = \left(\frac{a_{0, 1}^2 - 4 a_{- 1, 1} a_{1, 1}}{p}
    \right), \label{eq:lucasx:signs}
  \end{equation}
  and
  \begin{equation*}
    \hat{Q} (x, y) = \frac{a_{1, 0}}{2 a_{1, 1}} (1 - \sigma_x) x +
     \frac{a_{0, 1}}{2 a_{1, 1}} (1 - \sigma_y) y + (1 - \sigma_x \sigma_y) x
     y
  \end{equation*}
  provided that $p$ is odd and $p \nmid a_{1, 1}$. If $p = 2$ or $p|a_{1, 1}$,
  then
  \begin{equation*}
    \hat{Q} (x, y) = - a_{1, - 1} a_{1, 0}^{p - 2} x - a_{- 1, 1} a_{0, 1}^{p
     - 2} y + (a_{1, 1} - \sigma_x \sigma_y) x y
  \end{equation*}
  with the understanding that, if $p = 2$, then $a^{p - 2} = 1$ for any
  integer $a$.
\end{theorem}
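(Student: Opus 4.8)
The plan is to mimic the proof of Theorem~\ref{thm:lucas}, starting from the splitting
\begin{equation*}
  A (p n + k) = \operatorname{ct} [P (x, y)^{p n} P (x, y)^k Q (x, y)]
  \equiv \operatorname{ct} [P (x, y)^n \Lambda_p [P (x, y)^k Q (x, y)]] \pmod{p},
\end{equation*}
using $P(x,y)^{pn} \equiv P(x^p, y^p)^n$ modulo $p$ and the definition of $\Lambda_p$. The key difference from Theorem~\ref{thm:lucas} is that the presence of $Q$ means $\Lambda_p [P^k Q]$ need not reduce to just its constant term. First I would analyze, exactly as in the proof of Theorem~\ref{thm:lucas}, which monomials $x^i y^j$ can survive in $\Lambda_p [P^k Q]$. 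A term $c\, x^i y^j$ survives iff $c\, x^{pi} y^{pj}$ is a term of $P(x,y)^k Q(x,y)$. Since $\operatorname{supp}(P) \subseteq \{-1,0,1\}^2$ and the exponents of $Q$ lie in $\{0,1\}^2$, writing $(pi, pj)$ as a nonnegative combination of the $k$ support vectors of $P$ plus one support vector of $Q$ forces $(i,j)$ to lie in or very near the Newton polytope of $P$; because $k \le p-1$, the only possibilities for $(i,j)$ are the four corners $\{0,1\}^2$ (with $x^0 y^0, x, y, xy$ potentially surviving), and the surviving coefficients are governed by whether $k = p-1$ or $k < p-1$.

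The central computation is to identify, for each of the four monomials $1, x, y, xy$, the coefficient of $x^i y^j$ in $\Lambda_p [P^k Q]$. The constant term part contributes $\operatorname{ct}[P^k Q] = A(k)$, accounting for the $B(n) A(k)$ summand once we factor $\operatorname{ct}[P^n (\cdots)]$ back through $B(n) = \operatorname{ct}[P^n]$. The crux is to show that the $x$, $y$, and $xy$ coefficients vanish unless $k = p-1$, and, when $k = p-1$, to evaluate them explicitly. I expect these coefficients to reduce, via the substitutions in Lemma~\ref{lem:trinomial:pm1x}, to constant terms of the shape $\operatorname{ct}[(a_{?} x^{-1} + a_{?} + a_{?} x)^{p-1} x]$ and $\operatorname{ct}[(\cdots)^{p-1}]$ obtained by specializing $P$ along the edges of its Newton polytope (fixing one variable to extract the relevant one-dimensional trinomial). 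Here is where $\sigma_x$, $\sigma_y$, and the two cases for $\hat Q$ enter: Lemma~\ref{lem:trinomial:pm1} supplies the Kronecker symbols $\sigma_x$, $\sigma_y$ for the pure $x$- and $y$-directions, and Lemma~\ref{lem:trinomial:pm1x} supplies the linear coefficients, with its case split ($p$ odd and $p \nmid a_{1,1}$ versus $p = 2$ or $p \mid a_{1,1}$) precisely matching the two displayed formulas for $\hat Q$.

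Concretely, I would reduce $\Lambda_p [P^{p-1} Q]$ to the claimed $\tilde Q(x,y)$ by computing four scalar constant terms. The coefficient of $1$ is $A(p-1)$; the coefficients of $x$, $y$, $xy$ must be shown to equal the coefficients appearing in $Q(\sigma_x x, \sigma_y y) - \alpha + \delta \hat Q(x,y)$, namely $\beta \sigma_x$ plus a $\delta$-correction, $\gamma \sigma_y$ plus a $\delta$-correction, and $\delta \sigma_x \sigma_y$ plus a $\delta$-correction. The terms linear in $\beta$ and $\gamma$ come directly from $Q$'s $\beta x$ and $\gamma y$ parts paired with the pure-direction trinomial congruences, producing the $\sigma_x$, $\sigma_y$ scalings. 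The subtler contributions are those proportional to $\delta$: the monomial $\delta x y$ in $Q$ interacts with $P^{p-1}$ to produce not only an $xy$ term (giving $\sigma_x \sigma_y$, hence the $(1 - \sigma_x \sigma_y)xy$ correction built into $\hat Q$) but also genuine $x$ and $y$ terms, which is why $\hat Q$ carries linear pieces. I expect the main obstacle to be bookkeeping these $\delta$-induced cross terms correctly and matching them against Lemma~\ref{lem:trinomial:pm1x}, especially verifying that the degenerate branch ($p = 2$ or $p \mid a_{1,1}$) is handled by the second formula for $\hat Q$. Once each of the four coefficients is matched, substituting back into $\operatorname{ct}[P(x,y)^n (\cdots)]$ and recognizing $\operatorname{ct}[P^n \tilde Q] = \tilde A(n)$ completes the $k = p-1$ case, while the $k < p-1$ case follows since only the constant term survives, yielding $B(n) A(k)$.
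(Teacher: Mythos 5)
Your proposal follows essentially the same route as the paper's own proof: the Cartier-operator splitting $A(pn+k) \equiv \operatorname{ct}[P^n \Lambda_p[P^k Q]]$, the observation that only the monomials $1, x, y, xy$ can survive in $\Lambda_p[P^k Q]$ (and only the constant term when $k < p-1$), the reduction of the coefficients of $x$, $y$, $xy$ to univariate trinomial constant terms along the edges of the Newton polytope of $P$, and the evaluation of these via Lemmas~\ref{lem:trinomial:pm1} and~\ref{lem:trinomial:pm1x}, including the degenerate branch $p=2$ or $p \mid a_{1,1}$. Carrying out the four coefficient computations you outline would reproduce the paper's argument.
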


\begin{proof}
  As for \eqref{eq:lucas:pnk:1} in the beginning of the proof of
  Theorem~\ref{thm:lucas}, we find
  \begin{eqnarray}
    A (p n + k) & = & \operatorname{ct} [P (x, y)^{p n + k} Q (x, y)] \nonumber\\
    & \equiv & \operatorname{ct} [P (x^p, y^p)^n P (x, y)^k Q (x, y)] \pmod{p} \nonumber\\
    & = & \operatorname{ct} [P (x, y)^n \Lambda_p [P (x, y)^k Q (x, y)]] \nonumber\\
    & = & \operatorname{ct} [P^n \Lambda_p [P^k Q]] .  \label{eq:lucasx:Apnk}
  \end{eqnarray}
  If $k < p - 1$, then $\Lambda_p [P^k Q] = \operatorname{ct} [P^k Q]$ because the
  degree, in $x$ or $y$ or their inverses, of each term of $P^k Q$ is bounded
  by $k + 1 < p$. In that case, we thus get
  \begin{equation*}
    A (p n + k) \equiv \operatorname{ct} [P^n] \operatorname{ct} [P^k Q] = B (n) A (k) \pmod{p} .
  \end{equation*}
  In the remainder, we therefore consider the case $k = p - 1$.
  \begin{eqnarray}
    \Lambda_p [P^{p - 1} Q] & = & \operatorname{ct} [P^{p - 1} Q] + \sum_{T \in \{ x,
    y, x y \}} T \cdot [T^p] [P^{p - 1} Q] \nonumber\\
    & = & A (p - 1) + \sum_{T \in \{ x, y, x y \}} T \cdot \operatorname{ct} \left[
    \left(\frac{P}{T} \right)^{p - 1} \frac{Q}{T} \right] 
    \label{eq:lucasx:lambda}
  \end{eqnarray}
  For $T = x y$, we obtain
  \begin{equation*}
    \operatorname{ct} \left[ \left(\frac{P}{x y} \right)^{p - 1} \frac{Q}{x y}
     \right] = a_{1, 1}^{p - 1} \delta
  \end{equation*}
  because all nonconstant terms of the polynomial inside the constant term on
  the left-hand side feature $x$ and $y$ with negative exponents. Similarly,
  for $T = x$, we have
  \begin{equation*}
    \operatorname{ct} \left[ \left(\frac{P}{x} \right)^{p - 1} \frac{Q}{x} \right] =
     \operatorname{ct} [(a_{1, - 1} y^{- 1} + a_{1, 0} + a_{1, 1} y)^{p - 1} (\beta +
     \delta y)]
  \end{equation*}
  because the left-hand side features no terms involving $x$ with positive
  exponent. We evaluate the right-hand side using
  Lemmas~\ref{lem:trinomial:pm1} and \ref{lem:trinomial:pm1x}, which show that
  \begin{eqnarray*}
    \operatorname{ct} [(a_{1, - 1} y^{- 1} + a_{1, 0} + a_{1, 1} y)^{p - 1}] & \equiv
    & \left(\frac{a_{1, 0}^2 - 4 a_{1, - 1} a_{1, 1}}{p} \right) = \sigma_x
    \pmod{p},\\
    \operatorname{ct} [(a_{1, - 1} y^{- 1} + a_{1, 0} + a_{1, 1} y)^{p - 1} y] &
    \equiv & \frac{a_{1, 0}}{2 a_{1, 1}} (1 - \sigma_x) \pmod{p},
  \end{eqnarray*}
  the latter assuming that $p$ is odd and $p \nmid a_{1, 1}$ (we will make
  this assumption for the remainder of the proof; if $p = 2$ or $p|a_{1, 1}$
  then we only need to use the corresponding evaluation provided by
  Lemma~\ref{lem:trinomial:pm1x} instead). Combining these we therefore have
  \begin{equation*}
    \operatorname{ct} \left[ \left(\frac{P}{x} \right)^{p - 1} \frac{Q}{x} \right]
     \equiv \sigma_x \beta + \frac{a_{1, 0}}{2 a_{1, 1}} (1 - \sigma_x) \delta
     \pmod{p},
  \end{equation*}
  and, by the same arguments,
  \begin{equation*}
    \operatorname{ct} \left[ \left(\frac{P}{y} \right)^{p - 1} \frac{Q}{y} \right]
     \equiv \sigma_y \gamma + \frac{a_{0, 1}}{2 a_{1, 1}} (1 - \sigma_y)
     \delta \pmod{p} .
  \end{equation*}
  Using these evaluations in \eqref{eq:lucasx:lambda}, we have shown that
  \begin{equation*}
    \Lambda_p [P^{p - 1} Q] \equiv A (p - 1) + \tilde{Q} (x, y) \pmod{p},
  \end{equation*}
  which, applied to \eqref{eq:lucasx:Apnk}, allows us to conclude
  \begin{eqnarray*}
    A (p n + (p - 1)) & \equiv & \operatorname{ct} [P^n (A (p - 1) + \tilde{Q} (x,
    y))] \pmod{p}\\
    & = & B (n) A (p - 1) + \tilde{A} (n),
  \end{eqnarray*}
  as claimed in \eqref{eq:lucasx}.
\end{proof}

\begin{example}
  If $Q (x, y) = 1$ in Theorem~\ref{thm:lucasx}, then $B (n) = A (n)$ as well
  as $\tilde{A} (n) = 0$, so that the generalized Lucas congruences
  \eqref{eq:lucasx} become the familiar Lucas congruences \eqref{eq:lucas}
  (that these hold follows more easily from Corollary~\ref{cor:lucas}).
\end{example}

\begin{example}
  \label{eg:lucasx:hyp}Consider the sequence
  \begin{equation}
    A (n) = \operatorname{ct} \left[ \left(x + y + \frac{1}{x} - \frac{1}{y}
    \right)^n (1 + x + x y) \right] . \label{eq:lucasx:hyp:ct}
  \end{equation}
  One can show, for instance using creative telescoping, see
  Remark~\ref{rk:ct}, that $A (n)$ has the particularly simple closed form
  \begin{equation}
    A (n) = (- 1)^{\lfloor n / 2 \rfloor + \lfloor n / 4 \rfloor}
    \binom{n}{\lfloor n / 2 \rfloor} \binom{\lfloor n / 2 \rfloor}{\lfloor n /
    4 \rfloor} . \label{eq:lucasx:hyp}
  \end{equation}
  Applying Theorem~\ref{thm:lucasx} to $A (n)$, we obtain $\sigma_x = \sigma_y
  = 1$ and $\hat{Q} (x, y) = - x y$ so that $\tilde{Q} (x, y) = Q (x, y) - 1 -
  x y = x$ and, thus,
  \begin{equation}
    \tilde{A} (n) = \operatorname{ct} \left[ \left(x + y + \frac{1}{x} - \frac{1}{y}
    \right)^n x \right] = \left\{\begin{array}{ll}
      A (n), & \text{if $n$ odd,}\\
      0, & \text{if $n$ even,}
    \end{array}\right. \label{eq:lucasx:Qx}
  \end{equation}
  where the latter equality can again be shown automatically using creative
  telescoping. Similarly,
  \begin{equation*}
    B (n) = \operatorname{ct} \left[ \left(x + y + \frac{1}{x} - \frac{1}{y}
     \right)^n \right] = \left\{\begin{array}{ll}
       A (n), & \text{if $n \equiv 0 \pmod{4}$,}\\
       0, & \text{otherwise.}
     \end{array}\right.
  \end{equation*}
  Using the relations of the sequences $\tilde{A} (n)$ and $B (n)$ to $A (n)$,
  the generalized Lucas congruences \eqref{eq:lucasx} provided by
  Theorem~\ref{thm:lucasx} take the simplified form
  \begin{equation*}
    A (p n + k) \equiv \left\{\begin{array}{ll}
       A (n) A (k), & \text{if $n \equiv 0 \pmod{4}$},\\
       A (n), & \text{if $n \equiv 1 \pmod{2}$ and $k = p -
       1$},\\
       0, & \text{otherwise,}
     \end{array}\right. \pmod{p},
  \end{equation*}
  where $n \in \mathbb{Z}_{\geq 0}$ and $k \in \{ 0, 1, \ldots, p - 1
  \}$.
\end{example}

\begin{remark}
  Note that Theorem~\ref{thm:lucasx} is sufficient to apply to all cases of
  constant terms $A (n) = \operatorname{ct} [P (x, y)^n Q (x, y)]$ where $\operatorname{supp}
  (P), \operatorname{supp} (Q) \subseteq \{ - 1, 0, 1 \}^2$. To see this, observe
  that, for instance in the case of the monomial $Q (x, y) = 1 / (x y)$, the
  constant term can be rewritten as
  \begin{equation*}
    \operatorname{ct} [P (x, y)^n / (x y)] = \operatorname{ct} [P (1 / x, 1 / y)^n x y],
  \end{equation*}
  where the right-hand side is such that Theorem~\ref{thm:lucasx} applies. In
  the same way, we can handle the monomials $1 / x$, $1 / y$ as well as $x /
  y$ and $y / x$ and, therefore, reduce the cases $\operatorname{supp} (Q) \subseteq
  \{ - 1, 0, 1 \}^2$ to $\operatorname{supp} (Q) \subseteq \{ 0, 1 \}^2$.
\end{remark}

The following is a special case of Theorem~\ref{thm:lucasx}, for which the
generalized Lucas congruences \eqref{eq:lucasx} take the simplified form
\eqref{eq:lucasx:simpl}.

\begin{corollary}
  \label{cor:lucasx:simpl}Let $A (n) = \operatorname{ct} [P (x, y)^n Q (x, y)]$ where
  $P, Q \in \mathbb{Z} [x^{\pm 1}, y^{\pm 1}]$ with
  \begin{equation*}
    P (x, y) = \sum_{(i, j) \in \{ - 1, 0, 1 \}^2} a_{i, j} x^i y^j, \quad Q
     (x, y) = \alpha + \beta x + \gamma y + \delta x y.
  \end{equation*}
  Suppose that $\delta = 0$, or that both $p$ is odd and $p \nmid a_{1, 1}$.
  Suppose further that $\sigma_x = \sigma_y = 1$, where $\sigma_x, \sigma_y$
  are as in \eqref{eq:lucasx:signs}. Then, for any $n \in
  \mathbb{Z}_{\geq 0}$ and $k \in \{ 0, 1, \ldots, p - 1 \}$,
  \begin{equation}
    A (p n + k) \equiv B (n) A (k) + \left\{\begin{array}{ll}
      0, & \text{if $k < p - 1$},\\
      A (n) - A (0) B (n), & \text{if $k = p - 1$},
    \end{array}\right. \pmod{p} . \label{eq:lucasx:simpl}
  \end{equation}
  Here, again, $B (n) = \operatorname{ct} [P (x, y)^n]$.
\end{corollary}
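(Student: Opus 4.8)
The plan is to deduce the corollary directly from Theorem~\ref{thm:lucasx} by showing that, under the stated hypotheses, the auxiliary polynomial $\tilde{Q}(x, y)$ collapses to $Q(x, y) - \alpha$, so that the correction term $\tilde{A}(n)$ appearing in \eqref{eq:lucasx} becomes $A(n) - A(0) B(n)$. The only difference between the conclusion \eqref{eq:lucasx:simpl} and the conclusion \eqref{eq:lucasx} of Theorem~\ref{thm:lucasx} is the value of this correction term in the case $k = p - 1$, so it suffices to evaluate $\tilde{A}(n) = \operatorname{ct}[P(x, y)^n \tilde{Q}(x, y)]$ with
\[
  \tilde{Q}(x, y) = Q(\sigma_x x, \sigma_y y) - \alpha + \delta \hat{Q}(x, y)
\]
under the corollary's assumptions.

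First I would substitute $\sigma_x = \sigma_y = 1$, which immediately collapses $Q(\sigma_x x, \sigma_y y)$ to $Q(x, y)$, leaving $\tilde{Q}(x, y) = Q(x, y) - \alpha + \delta \hat{Q}(x, y)$. Next I would dispose of the remaining term $\delta \hat{Q}(x, y)$ by a short case analysis that matches the two regimes permitted in the hypothesis. If $\delta = 0$, then $\delta \hat{Q} = 0$ regardless of which of the two formulas for $\hat{Q}$ supplied by Theorem~\ref{thm:lucasx} is in force. If instead $p$ is odd and $p \nmid a_{1, 1}$, then the first formula for $\hat{Q}$ applies, and substituting $\sigma_x = \sigma_y = 1$ makes each of its three coefficients $\tfrac{a_{1, 0}}{2 a_{1, 1}}(1 - \sigma_x)$, $\tfrac{a_{0, 1}}{2 a_{1, 1}}(1 - \sigma_y)$ and $(1 - \sigma_x \sigma_y)$ vanish, so that $\hat{Q} = 0$. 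In either case one obtains $\tilde{Q}(x, y) = Q(x, y) - \alpha = \beta x + \gamma y + \delta x y$.

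Finally I would translate this into the claimed shape of the correction term. Since $A(0) = \operatorname{ct}[P(x,y)^0 Q(x, y)] = \operatorname{ct}[Q(x, y)] = \alpha$, linearity of the constant term gives
\[
  \tilde{A}(n) = \operatorname{ct}[P^n (Q - \alpha)] = \operatorname{ct}[P^n Q] - \alpha \operatorname{ct}[P^n] = A(n) - A(0) B(n),
\]
which is exactly the correction term recorded in \eqref{eq:lucasx:simpl}. Substituting this evaluation back into \eqref{eq:lucasx} yields \eqref{eq:lucasx:simpl}, completing the argument.

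I do not expect a genuine obstacle, since all of the analytic content is already carried by Theorem~\ref{thm:lucasx}; the corollary is a clean specialization. The only points requiring care are bookkeeping: keeping the two-part hypothesis ($\delta = 0$ versus $p$ odd with $p \nmid a_{1, 1}$) aligned with the two formulas for $\hat{Q}$ in Theorem~\ref{thm:lucasx}, confirming that $\delta \hat{Q}$ vanishes in every admissible case, and noting the identity $A(0) = \alpha$ that converts $\operatorname{ct}[P^n(Q - \alpha)]$ into $A(n) - A(0) B(n)$.
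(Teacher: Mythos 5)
Your proposal is correct and follows exactly the paper's own route: the published proof likewise observes that under the stated hypotheses $\tilde{Q}(x,y) = Q(x,y) - \alpha$, whence $\tilde{A}(n) = A(n) - \alpha B(n) = A(n) - A(0)B(n)$, and then invokes Theorem~\ref{thm:lucasx}. Your version merely spells out the case analysis ($\delta = 0$ versus $p$ odd with $p \nmid a_{1,1}$) that the paper leaves implicit in the phrase ``under the present conditions.''
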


\begin{proof}
  This is an immediate consequence of Theorem~\ref{thm:lucasx} because, under
  the present conditions, we have $\tilde{Q} (x, y) = Q (x, y) - \alpha$ which
  implies $\tilde{A} (n) = A (n) - \alpha B (n) = A (n) - A (0) B (n)$.
\end{proof}

\begin{remark}
  \label{rk:lucas:2}We note that the congruences \eqref{eq:lucasx:simpl},
  together with the Lucas congruences $B (p n + k) \equiv B (n) B (k)
  \pmod{p}$, form a two-state linear $p$-scheme (with the
  states $A_0 = A$, $A_1 = B$) characterizing the sequence $A (n)$ modulo $p$.
  In the sense discussed after Proposition~\ref{prop:lucas:scheme}, it is
  therefore shown by Corollary~\ref{cor:lucasx:simpl} that the sequence $A
  (n)$ satisfies an order $2$ version of the Lucas congruences.
\end{remark}

In Lemma~\ref{lem:lucas:shift} below, we observe that the congruences
\eqref{eq:lucasx:simpl} are natural consequences of the ordinary Lucas
congruences \eqref{eq:lucas} in the sense that, if $B (n)$ is a sequence
satisfying \eqref{eq:lucas}, then any linear combination $A (n) = \alpha B (n)
+ \beta B (n + 1)$ satisfies the congruences \eqref{eq:lucasx:simpl}. Certain
constant terms $A (n) = \operatorname{ct} [P (x, y)^n Q (x, y)]$ can indeed be
expressed as linear combinations of $B (n) = \operatorname{ct} [P (x, y)^n]$ and $B (n
+ 1)$ (see, for instance, Example~\ref{eg:ct:shift}), in which case
Corollary~\ref{cor:lucasx:simpl} can therefore be obtained (more simply) as a
consequence of Lemma~\ref{lem:lucas:shift}. On the other hand, as indicated by
Example~\ref{eg:ct:noshift}, this is not generally the case. It would be of
interest to fully characterize the constant terms which are linear
combinations of shifts of $B (n)$.

\begin{lemma}
  \label{lem:lucas:shift}Suppose that $B (n)$ satisfies the Lucas congruences
  \eqref{eq:lucas} modulo $p$. Then, for any $\alpha, \beta \in \mathbb{Z}$,
  $A (n) = \alpha B (n) + \beta B (n + 1)$ satisfies the congruences
  \eqref{eq:lucasx:simpl} modulo $p$.
\end{lemma}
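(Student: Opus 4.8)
The plan is to verify the congruences \eqref{eq:lucasx:simpl} for $A(n) = \alpha B(n) + \beta B(n+1)$ directly from the ordinary Lucas congruences \eqref{eq:lucas} satisfied by $B(n)$, splitting into the two cases $k < p-1$ and $k = p-1$. Throughout I would use that $B(0) \equiv 1 \pmod p$ (the case $n=0$ of \eqref{eq:lucas}), so that $A(0) \equiv \alpha + \beta B(1) \pmod p$.

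First, for $0 \le k < p-1$, both $pn+k$ and $pn+k+1$ have base-$p$ expansions consisting of the digits of $n$ followed by one further digit ($k$ and $k+1$, respectively, both in $\{0,\dots,p-1\}$). Applying \eqref{eq:lucas} to each gives $B(pn+k) \equiv B(n)B(k)$ and $B(pn+k+1) \equiv B(n)B(k+1) \pmod p$, so that
\[
A(pn+k) = \alpha B(pn+k) + \beta B(pn+k+1) \equiv B(n)\bigl(\alpha B(k) + \beta B(k+1)\bigr) = B(n)A(k) \pmod p,
\]
which is exactly the first case of \eqref{eq:lucasx:simpl}.

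The step requiring care is $k = p-1$, where adding $1$ produces a carry in base $p$, namely $pn+(p-1)+1 = p(n+1)$. Here I would apply \eqref{eq:lucas} to get $B(pn+p-1) \equiv B(n)B(p-1)$ and, accounting for the carry, $B(p(n+1)) \equiv B(n+1)B(0) \equiv B(n+1) \pmod p$, which yields $A(pn+p-1) \equiv \alpha B(n)B(p-1) + \beta B(n+1) \pmod p$. It then remains to check that the claimed right-hand side $B(n)A(p-1) + A(n) - A(0)B(n)$ collapses to the same expression. Expanding each term via $A(n) = \alpha B(n) + \beta B(n+1)$ and using $B(p) \equiv B(1) \pmod p$ (yet another instance of \eqref{eq:lucas}), the contributions $\alpha B(n)$ and $\beta B(1)B(n)$ cancel in pairs, leaving precisely $\alpha B(n)B(p-1) + \beta B(n+1)$, in agreement with the value computed for $A(pn+p-1)$.

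The only genuine obstacle is the bookkeeping of this carry at $k = p-1$: it is exactly the passage from $pn+(p-1)+1$ to $p(n+1)$ that breaks the clean multiplicative pattern and forces the correction term $A(n) - A(0)B(n)$ to appear in \eqref{eq:lucasx:simpl}. Everything else is routine substitution, so I expect the proof to be short, with the carry analysis as its conceptual core.
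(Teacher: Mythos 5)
Your proposal is correct and follows essentially the same route as the paper: split into $k<p-1$ and $k=p-1$, apply the Lucas congruences to $B(pn+k)$, $B(pn+k+1)$, and (handling the carry) $B(p(n+1))\equiv B(n+1)$ and $B(p)\equiv B(1)$, then match terms against $B(n)A(p-1)+A(n)-A(0)B(n)$. The only difference is presentational — you expand both sides and check the cancellation, while the paper substitutes $\alpha B(p-1)\equiv A(p-1)-\beta B(1)$ and $\beta B(n+1)=A(n)-\alpha B(n)$ to derive the target form directly — and these are the same algebra.
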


\begin{proof}
  Suppose that $k < p - 1$. Then, using the fact that $B (n)$ satisfies the
  Lucas congruences \eqref{eq:lucas}, we have
  \begin{eqnarray*}
    A (p n + k) & = & \alpha B (p n + k) + \beta B (p n + (k + 1))\\
    & \equiv & \alpha B (n) B (k) + \beta B (n) B (k + 1) \pmod{p}\\
    & = & B (n) A (k),
  \end{eqnarray*}
  as in the congruences \eqref{eq:lucasx:simpl}. On the other hand, let $k = p
  - 1$. Then,
  \begin{eqnarray*}
    A (p n + p - 1) & = & \alpha B (p n + p - 1) + \beta B (p (n + 1))\\
    & \equiv & \alpha B (n) B (p - 1) + \beta B (n + 1) B (0) \pmod{p} .
  \end{eqnarray*}
  Using $B (0) = 1$ and $\alpha B (p - 1) = A (p - 1) - \beta B (p) \equiv A
  (p - 1) - \beta B (1)$ as well as $\beta B (n + 1) = A (n) - \alpha B (n)$,
  this implies
  \begin{eqnarray*}
    A (p n + p - 1) & \equiv & B (n) [A (p - 1) - \beta B (1)] + [A (n) -
    \alpha B (n)] \pmod{p}\\
    & = & B (n) A (p - 1) + A (n) - B (n) [\alpha + \beta B (1)]\\
    & = & B (n) A (p - 1) + A (n) - B (n) A (0),
  \end{eqnarray*}
  so that congruence \eqref{eq:lucasx:simpl} holds in this case as well.
\end{proof}

\begin{example}
  \label{eg:ct:shift}The sequence
  \begin{equation*}
    B (n) = \operatorname{ct} \left[ \left(4 + x + y + \frac{1}{x} + \frac{1}{y}
     \right)^n \right] = \sum_{k = 0}^n \binom{n}{k} \binom{2 k}{k} \binom{2
     (n - k)}{n - k}
  \end{equation*}
  is the Ap\'ery-like sequence labeled $\boldsymbol{E}$ by Zagier
  \cite{zagier4} (and $(d)$ in \cite{az-de06}). By
  Corollary~\ref{cor:lucas}, the sequence $B (n)$ satisfies the Lucas
  congruences \eqref{eq:lucas}. (We refer to the discussion after
  Example~\ref{eg:apery} for more information on Ap\'ery-like sequences.) It
  further follows from Corollary~\ref{cor:lucasx:simpl} that the sequence
  \begin{equation*}
    A (n) = \operatorname{ct} \left[ \left(4 + x + y + \frac{1}{x} + \frac{1}{y}
     \right)^n x \right]
  \end{equation*}
  with initial values $0, 1, 8, 57, 400, 2820, 20064, 144137, \ldots$
  satisfies the generalized Lucas congruences \eqref{eq:lucasx:simpl}. In this
  particular case, this can also be deduced from Lemma~\ref{lem:lucas:shift}
  because of the relation
  \begin{equation}
    A (n) = \frac{1}{4} B (n + 1) - B (n), \label{eq:lucas:shift}
  \end{equation}
  which follows from the fact that the Laurent polynomial $4 + x + y + x^{- 1}
  + y^{- 1}$ is symmetric in $x, y, x^{- 1}, y^{- 1}$, so that
  \begin{equation*}
    B (n + 1) = 4 B (n) + \sum_{T \in \{ x, y, x^{- 1}, y^{- 1} \}} \operatorname{ct}
     \left[ \left(4 + x + y + \frac{1}{x} + \frac{1}{y} \right)^n T \right] =
     4 (B (n) + A (n))
  \end{equation*}
  because the contribution from each of the four possibilities for $T$ is the
  same.
\end{example}

However, as illustrated by the next example, it is not generally the case in
Corollary~\ref{cor:lucasx:simpl} that the sequence $A (n)$ is a linear
combination of $B (n)$ and $B (n + 1)$ as in \eqref{eq:lucas:shift}.

\begin{example}
  \label{eg:ct:noshift}As a variation of the previous example, let us
  consider, for any integer $\lambda$, the sequences
  \begin{equation*}
    A (n) = \operatorname{ct} \left[ \left(\lambda + x + y + \frac{1}{x} -
     \frac{1}{y} \right)^n x \right] = \sum_{\substack{
       k = 0\\
       k \equiv 1 \; (2)
     }}^n \lambda^{n - k} \binom{n}{k} D (k),
  \end{equation*}
  where $D (n)$ is the hypergeometric term \eqref{eq:lucasx:hyp} and the
  latter equality is a consequence of \eqref{eq:lucasx:Qx} and binomially
  expanding $(\lambda + P)^n$ with $P = x + y + x^{- 1} - y^{- 1}$. As in the
  previous example, it follows from Corollary~\ref{cor:lucasx:simpl} that the
  sequence $A (n)$ satisfies the generalized Lucas congruences
  \eqref{eq:lucasx:simpl} with
  \begin{eqnarray*}
    B (n) & = & \operatorname{ct} \left[ \left(\lambda + x + y + \frac{1}{x} -
    \frac{1}{y} \right)^n \right] = \sum_{\substack{
      k = 0\\
      k \equiv 0 \; (4)
    }}^n \lambda^{n - k} \binom{n}{k} D (k)\\
    & = & \sum_{k = 0}^{\lfloor n / 4 \rfloor} (- 1)^k \lambda^{n - 4 k}
    \binom{n}{4 k} \binom{4 k}{2 k} \binom{2 k}{k} .
  \end{eqnarray*}
  In contrast to the previous example, however, the sequence $A (n)$ cannot be
  written as a linear combination of $B (n)$ and $B (n + 1)$ as in
  \eqref{eq:lucas:shift} (as can be seen, for instance, by comparing initial
  values).
\end{example}

\section{Catalan numbers}\label{sec:catalan}

As an application, we spell out the univariate special case of
Theorem~\ref{thm:lucasx} which is particularly simple. We then illustrate the
result by applying it to the Catalan numbers $C (n)$. In particular, we derive
a Lucas-like congruence for $C (n)$ modulo $p$ as a product of terms
corresponding to the $p$-adic digits of $n$.

\begin{corollary}
  \label{cor:lucasx:1}Let $A (n) = \operatorname{ct} [(a x^{- 1} + b + c x)^n (\alpha
  + \beta x)]$ where $a, b, c, \alpha, \beta \in \mathbb{Z}$ with $c \nequiv 0
  \pmod{p}$. Then the generalized Lucas congruences
  \eqref{eq:lucasx:simpl} hold with $B (n) = \operatorname{ct} [(a x^{- 1} + b + c
  x)^n]$.
\end{corollary}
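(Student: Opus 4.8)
The plan is to deduce Corollary~\ref{cor:lucasx:1} directly from Theorem~\ref{thm:lucasx} by viewing the univariate constant term as a bivariate one. First I would set $P(x,y) = a x^{-1} + b + c x$ and $Q(x,y) = \alpha + \beta x$, regarded as elements of $\mathbb{Z}[x^{\pm 1}, y^{\pm 1}]$ that happen not to involve $y$. Since $P^n Q$ does not involve $y$, its bivariate constant term equals the univariate one, so $A(n) = \operatorname{ct}[(a x^{-1} + b + c x)^n (\alpha + \beta x)]$ is exactly a sequence to which Theorem~\ref{thm:lucasx} applies, with coefficients $a_{-1,0} = a$, $a_{0,0} = b$, $a_{1,0} = c$ and all remaining $a_{i,j} = 0$, and with $\gamma = \delta = 0$. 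The same remark identifies $B(n) = \operatorname{ct}[P(x,y)^n]$ with the stated $B(n) = \operatorname{ct}[(a x^{-1} + b + c x)^n]$.

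Next I would evaluate the data appearing in Theorem~\ref{thm:lucasx}. Because $a_{1,-1} = a_{1,1} = 0$ and $a_{1,0} = c$, the sign $\sigma_x$ in \eqref{eq:lucasx:signs} equals $(c^2/p)$, which is $1$ since the hypothesis $c \nequiv 0 \pmod p$ makes $c^2$ a nonzero square modulo $p$; this holds for every prime, including $p = 2$ under the convention $(d/2) \equiv d \pmod 2$. The key simplification is that $\delta = 0$, together with $Q$ being independent of $y$, collapses the definition of $\tilde{Q}$: from $\tilde{Q}(x,y) = Q(\sigma_x x, \sigma_y y) - \alpha + \delta\hat{Q}(x,y)$ the term $\hat{Q}$ drops out regardless of whether $a_{1,1}$ is divisible by $p$ (so no case distinction is needed), while $Q(\sigma_x x, \sigma_y y) = \alpha + \beta \sigma_x x$ does not depend on $\sigma_y$. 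Using $\sigma_x = 1$ this gives $\tilde{Q}(x,y) = \beta x$, hence $\tilde{A}(n) = \beta \operatorname{ct}[(a x^{-1} + b + c x)^n x]$.

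Finally I would identify this with the right-hand side of \eqref{eq:lucasx:simpl}. Expanding $A(n) = \alpha B(n) + \beta\operatorname{ct}[(a x^{-1}+b+cx)^n x]$ shows $\tilde{A}(n) = A(n) - \alpha B(n)$, and since $A(0) = \operatorname{ct}[\alpha + \beta x] = \alpha$ this is precisely $A(n) - A(0)B(n)$. Substituting $\tilde{A}(n) = A(n) - A(0)B(n)$ into the congruences \eqref{eq:lucasx} of Theorem~\ref{thm:lucasx} yields \eqref{eq:lucasx:simpl} verbatim. The only real obstacle is bookkeeping rather than mathematics: one must confirm that $\sigma_x = 1$ for all primes and notice that the $y$-independence of $Q$ makes Theorem~\ref{thm:lucasx} directly applicable, whereas the more convenient Corollary~\ref{cor:lucasx:simpl} cannot be invoked as a black box because its requirement $\sigma_y = 1$ fails in this embedding (one finds $\sigma_y = 0$).
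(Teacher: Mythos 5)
Your proposal is correct and follows essentially the same route as the paper's own proof: apply Theorem~\ref{thm:lucasx} with $\gamma=\delta=0$, observe that $\sigma_x=(c^2/p)=1$ for every prime $p$ with $p\nmid c$, and conclude $\tilde{Q}(x,y)=Q(x,y)-\alpha$, hence $\tilde{A}(n)=A(n)-A(0)B(n)$, which turns \eqref{eq:lucasx} into \eqref{eq:lucasx:simpl}. Your additional observations --- that $\delta=0$ removes any case distinction involving $\hat{Q}$, and that $\sigma_y=0$ so Corollary~\ref{cor:lucasx:simpl} cannot be cited as a black box (consistent with the paper's phrasing ``as in Corollary~\ref{cor:lucasx:simpl}'') --- are accurate; the paper merely adds an alternative argument via Lemma~\ref{lem:lucas:shift} for odd $p$, which your proof does not need.
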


\begin{proof}
  This follows directly from Theorem~\ref{thm:lucasx} since $\sigma_x = (c^2 /
  p) = 1$ in the present case so that we have, as in
  Corollary~\ref{cor:lucasx:simpl}, $\tilde{Q} (x, y) = Q (x, y) - \alpha$
  and, therefore, $\tilde{A} (n) = A (n) - A (0) B (n)$.
  
  Alternatively, for odd $p$, the result is a special case of
  Lemma~\ref{lem:lucas:shift} because, as observed in \eqref{eq:trinomial:x},
  \begin{equation*}
    \operatorname{ct} [(a x^{- 1} + b + c x)^n x] = \frac{1}{2 c} (B (n + 1) - b B
     (n)),
  \end{equation*}
  so that
  \begin{equation*}
    A (n) = \frac{\beta}{2 c} B (n + 1) + \left(\alpha - \frac{b \beta}{2 c}
     \right) B (n) .
  \end{equation*}
\end{proof}

\begin{example}
  The case $c = 0$ in Corollary~\ref{cor:lucasx:1} needs to be excluded since,
  in that case, $A (n) = \alpha b^n + a \beta n b^{n - 1}$ while $B (n) =
  b^n$. However, Theorem~\ref{thm:lucasx} still applies (now $\sigma_x = 0$ so
  that $\tilde{Q} (x, y) = 0$ and $\tilde{A} (n) = 0$) to show that, for any
  $n \in \mathbb{Z}_{\geq 0}$ and $k \in \{ 0, 1, \ldots, p - 1 \}$, we
  have the congruences
  \begin{equation*}
    A (p n + k) \equiv B (n) A (k) \pmod{p},
  \end{equation*}
  which are straightforward to verify directly using Fermat's little theorem.
\end{example}

Recall from \eqref{eq:catalan:ct} that the Catalan numbers have the constant
term expression $C (n) = \operatorname{ct} [(x^{- 1} + 2 + x)^n (1 - x)]$.

\begin{corollary}
  \label{cor:catalan}Let $C (n)$ be the Catalan numbers. Modulo $p$,
  \begin{equation*}
    C (p n + k) \equiv \left\{\begin{array}{ll}
       \binom{2 n}{n} C (k), & \text{if $k < p - 1$},\\
       - (2 n + 1) C (n), & \text{if $k = p - 1$} .
     \end{array}\right.
  \end{equation*}
\end{corollary}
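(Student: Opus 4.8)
The plan is to apply Corollary~\ref{cor:lucasx:1} directly to the constant term representation $C(n) = \operatorname{ct}[(x^{-1} + 2 + x)^n(1 - x)]$ from \eqref{eq:catalan:ct}. Here the relevant parameters are $a = c = 1$, $b = 2$, $\alpha = 1$, and $\beta = -1$, and since $c = 1 \nequiv 0 \pmod{p}$ for every prime $p$, the hypotheses of Corollary~\ref{cor:lucasx:1} are met. Thus the generalized Lucas congruences \eqref{eq:lucasx:simpl} hold with $B(n) = \operatorname{ct}[(x^{-1} + 2 + x)^n]$, and the first task is simply to identify $B(n)$ in closed form. By \eqref{eq:central}, we have $B(n) = \operatorname{ct}[(x^{-1} + 2 + x)^n] = \binom{2n}{n}$, the central binomial coefficient, which immediately gives the $k < p-1$ case of the claimed congruence.

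For the case $k = p - 1$, the statement \eqref{eq:lucasx:simpl} yields
\begin{equation*}
  C(pn + (p-1)) \equiv B(n) C(p-1) + \bigl(C(n) - C(0) B(n)\bigr) \pmod{p}.
\end{equation*}
Since $C(0) = 1$, this is $B(n) C(p-1) + C(n) - B(n)$. The next step is to evaluate $C(p-1) \pmod p$. Using the closed form $C(n) = \frac{1}{n+1}\binom{2n}{n}$, one has $C(p-1) = \frac{1}{p}\binom{2p-2}{p-1}$; alternatively, and more in the spirit of the constant-term framework, $C(p-1) = \operatorname{ct}[(x^{-1}+2+x)^{p-1}(1-x)] = \binom{2p-2}{p-1} - \operatorname{ct}[(x^{-1}+2+x)^{p-1}x]$, which can be computed via Lemmas~\ref{lem:trinomial:pm1} and~\ref{lem:trinomial:pm1x}. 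I expect $C(p-1) \equiv -1 \pmod p$ for all primes $p$; this is the one genuine computation in the proof and is the most likely place for a sign or edge-case slip (notably $p = 2$).

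Granting $C(p-1) \equiv -1$, the $k = p-1$ branch becomes $-B(n) + C(n) - B(n) = C(n) - 2B(n)$. The final step is to rewrite this in the desired form $-(2n+1)C(n)$. The plan is to use the identity $B(n) = \binom{2n}{n} = (n+1)C(n)$, so that $C(n) - 2B(n) = C(n) - 2(n+1)C(n) = -(2n+1)C(n)$, exactly as stated. The main obstacle is therefore not conceptual but computational: correctly pinning down $C(p-1) \bmod p$ uniformly over all primes (including $p=2$, where one should double-check directly that $C(1) = 1 \equiv -1 \pmod 2$), after which the algebraic simplification using $\binom{2n}{n} = (n+1)C(n)$ closes the argument.
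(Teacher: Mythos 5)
Your proposal is correct and follows essentially the same route as the paper's own proof: apply Corollary~\ref{cor:lucasx:1} to $C(n) = \operatorname{ct}[(x^{-1}+2+x)^n(1-x)]$ with $B(n) = \binom{2n}{n}$, evaluate $C(p-1) \equiv -1 \pmod{p}$ via Lemmas~\ref{lem:trinomial:pm1} and~\ref{lem:trinomial:pm1x} (which indeed give $0 - 1 = -1$ for odd $p$, and $C(1)=1\equiv -1$ handles $p=2$), and simplify with $\binom{2n}{n} = (n+1)C(n)$. The value you flagged as the one genuine computation checks out exactly as you expected, so no gap remains.
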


\begin{proof}
  As noted in \eqref{eq:central}, $\operatorname{ct} [(x^{- 1} + 2 + x)^n]$ are the
  central binomial coefficients. Since $C (0) = 1$,
  Corollary~\ref{cor:lucasx:1} thus shows that, modulo $p$,
  \begin{equation*}
    C (p n + k) \equiv \binom{2 n}{n} C (k) + \left\{\begin{array}{ll}
       0, & \text{if $k < p - 1$},\\
       C (n) - \binom{2 n}{n}, & \text{if $k = p - 1$} .
     \end{array}\right.
  \end{equation*}
  In the case $k = p - 1$, it follows from Lemmas~\ref{lem:trinomial:pm1} and
  \ref{lem:trinomial:pm1x} that $C (p - 1) \equiv - 1 \pmod{p}$, which implies
  \begin{equation*}
    \binom{2 n}{n} C (p - 1) + C (n) - \binom{2 n}{n} \equiv C (n) - 2
     \binom{2 n}{n} = - (2 n + 1) C (n) \pmod{p},
  \end{equation*}
  as claimed.
\end{proof}

By iterating Corollary~\ref{cor:catalan} and combining it with the Lucas
congruences for the central binomial coefficients, we obtain the following
equivalent result which spells out generalized Lucas congruences for the
Catalan numbers in a form similar to~\eqref{eq:lucas}.

\begin{corollary}
  \label{cor:catalan:digits}Suppose the $p$-adic digits of $n$ are $p - 1,
  \ldots, p - 1, n_0, n_1, \ldots, n_r$ with $m$ initial digits that are $p -
  1$ and $n_0 \neq p - 1$. Then we have
  \begin{equation}
    C (n) \equiv \delta (n_0, m) C (n_0) \binom{2 n_1}{n_1} \cdots \binom{2
    n_r}{n_r} \pmod{p}, \label{eq:catalan:digits}
  \end{equation}
  where $\delta (n_0, m) = 1$ if $m = 0$ and $\delta (n_0, m) = - (2 n_0 + 1)$
  if $m \geq 1$.
\end{corollary}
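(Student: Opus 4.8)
The plan is to iterate Corollary~\ref{cor:catalan}, peeling off the base-$p$ digits of $n$ one at a time starting from the least significant end, and then to invoke the ordinary Lucas congruences for the central binomial coefficients $\binom{2N}{N} = \operatorname{ct}[(x^{-1}+2+x)^N]$ (which hold by Corollary~\ref{cor:lucas}, cf.\ \eqref{eq:central}) to finish off the remaining digits in one stroke. Writing $n^{(s)} = \lfloor n/p^s \rfloor$, so that $n^{(s)}$ has least significant digit equal to the $s$-th base-$p$ digit of $n$ and $n^{(s)} = p\, n^{(s+1)} + (\text{that digit})$, each application of Corollary~\ref{cor:catalan} expresses $C(n^{(s)})$ modulo $p$ in terms of $C(n^{(s+1)})$ or of a central binomial coefficient, according to whether the $s$-th digit equals $p-1$ or not.

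First I would dispose of the block of $m$ trailing digits equal to $p-1$. For each $s = 0, 1, \ldots, m-1$ the $s$-th digit is $p-1$, so the second case of Corollary~\ref{cor:catalan} applies and contributes a factor $-(2 n^{(s+1)}+1)$. The key simplification is the congruence $n^{(s+1)} \equiv (\text{$(s+1)$-st digit}) \pmod p$: for $s+1 \le m-1$ this digit is again $p-1$, whence $2 n^{(s+1)}+1 \equiv 2(p-1)+1 \equiv -1$ and the factor collapses to $1$, while for $s+1 = m$ the digit is $n_0$ and the factor is $-(2 n_0 + 1)$. Thus the entire product over the trailing block telescopes to the single factor $\delta(n_0,m) = -(2n_0+1)$ when $m \ge 1$; when $m = 0$ the block is empty and the empty product equals $1 = \delta(n_0,0)$.

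Next, once the peeling reaches $n^{(m)}$, whose least significant digit is $n_0 \ne p-1$, the first case of Corollary~\ref{cor:catalan} gives $C(n^{(m)}) \equiv \binom{2 n^{(m+1)}}{n^{(m+1)}} C(n_0)$, where $n^{(m+1)}$ has base-$p$ digits $n_1, \ldots, n_r$. At this point I would stop peeling with Corollary~\ref{cor:catalan} and instead apply the ordinary Lucas congruences to the central binomial coefficient directly, obtaining $\binom{2 n^{(m+1)}}{n^{(m+1)}} \equiv \binom{2 n_1}{n_1} \cdots \binom{2 n_r}{n_r} \pmod p$. Multiplying together the factor $\delta(n_0,m)$ from the trailing block, the factor $C(n_0)$, and this product of central binomials then yields exactly \eqref{eq:catalan:digits}.

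I expect the main obstacle to be the bookkeeping in the second step: correctly aligning which digit triggers which case of Corollary~\ref{cor:catalan}, and verifying the congruence $n^{(s+1)} \equiv (\text{$(s+1)$-st digit})$ that makes all but the last of the $m$ factors equal to $1$, so that the telescoping product really collapses to the single quantity $\delta(n_0,m)$. The remainder is a clean combination of the digitwise recursion with the Lucas congruences for $\binom{2N}{N}$, together with keeping the $m = 0$ and $m \ge 1$ cases separate.
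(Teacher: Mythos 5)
Your proposal is correct and takes essentially the same route as the paper, which obtains Corollary~\ref{cor:catalan:digits} precisely by iterating Corollary~\ref{cor:catalan} digit by digit and then invoking the Lucas congruences for the central binomial coefficients. Your bookkeeping checks out: the trailing factors $-(2n^{(s+1)}+1)$ indeed collapse via $n^{(s+1)} \equiv d_{s+1} \pmod{p}$ (each factor with digit $p-1$ reduces to $1$, the last to $-(2n_0+1)$), yielding exactly $\delta(n_0,m)$.
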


Corollary~\ref{cor:catalan:digits} is not difficult to establish directly
(though we have not been able to find it explicitly stated in the literature):
in particular, the case $m = 0$ (equivalently, $n \nequiv - 1 \pmod{p}$) is obvious from the Lucas congruences for the central
binomial coefficients combined with the representation $C (n) = \frac{1}{n +
1} \binom{2 n}{n}$. On the other hand, we obtained
Corollary~\ref{cor:catalan:digits} in a simple and natural manner, as a very
special case of Theorem~\ref{thm:lucasx} which applies to many other sequences
of combinatorial and number theoretic interest.

\begin{example}
  \label{eg:catalan:3}The generalized Lucas congruences make certain
  properties of a sequence particularly transparent. For instance, for the
  Catalan numbers, Deutsch and Sagan \cite[Theorem~5.2]{ds-cong} prove that
  \begin{equation*}
    C (n) \equiv \left\{\begin{array}{ll}
       (- 1)^{\delta_3^{\ast} (n + 1)}, & \text{if $n + 1 \in T^{\ast}
       (01)$,}\\
       0, & \text{otherwise,}
     \end{array}\right. \pmod{3},
  \end{equation*}
  where $T^{\ast} (01)$ consists of those integers $m \geq 0$ whose
  ternary expansion $m = m_0 + 3 m_1 + 3^2 m_2 + \ldots$ is such that $m_i \in
  \{ 0, 1 \}$ for all $i \geq 1$ while, in terms of this expansion,
  $\delta_3^{\ast} (m)$ is the number of $m_i$ with $i \geq 1$ such that
  $m_i = 1$.
  
  We observe that this characterization of the Catalan numbers modulo $3$
  follows directly from Corollary~\ref{cor:catalan:digits}, the only work
  consisting in transcribing the notations: indeed, notice that $C (n_0) = 1$
  in \eqref{eq:catalan:digits} (because $C (0) = C (1) = 1$) while $\binom{2
  n_i}{n_i}$ is $1, - 1$, or $0$ modulo $3$ depending on whether $n_i$ is $0,
  1$, or $2$, respectively. We thus see from \eqref{eq:catalan:digits} that $C
  (n)$ is divisible by $3$ if and only if $m \geq 1$ and $n_0 = 1$ (in
  that case $\delta (n_0, m) = 0$), or if one of the $n_1, n_2, \ldots$ is
  $2$. This is equivalent to $n + 1 \not\in T^{\ast} (01)$. In the same manner,
  we can see that $C (n) \equiv (- 1)^{\delta_3^{\ast} (n + 1)} \pmod{3}$ if $n + 1 \in T^{\ast} (01)$.
\end{example}

\begin{example}
  To emphasize the point of the previous example, we use
  Corollary~\ref{cor:catalan:digits} to produce a similar result for the
  Catalan numbers modulo $5$:
  \begin{equation*}
    C (n) \equiv \left\{\begin{array}{ll}
       2^{\lambda (n)}, & \text{if $n \not\in Z$,}\\
       0, & \text{otherwise,}
     \end{array}\right. \pmod{5},
  \end{equation*}
  where, using the notation of Corollary~\ref{cor:catalan:digits}, the set $Z$
  consists of those integers $n \geq 0$ satisfying $n_0 = 3$, or $n_i \in
  \{ 3, 4 \}$ for some $i \geq 1$, or both $n_0 = 2$ and $m \geq 1$.
  The exponent $\lambda (n)$ is the number of $n_1, n_2, \ldots$ equal to $1$;
  and $\lambda (n)$ is increased by $1$ if $n_0 = 2$, or if both $n_0 = 1$ and
  $m \geq 1$, while $\lambda (n)$ is increased by $2$ if both $n_0 = 0$
  and $m \geq 1$. Though effective, we invite the reader to translate
  this description into a more pleasing form.
\end{example}

\section{Conclusion}\label{sec:conclusion}

Theorem~\ref{thm:lucasx} characterizes all sequences $A (n)$ modulo $p$ which
can be expressed as the constant terms of $P (x, y)^n Q (x, y)$ for Laurent
polynomials $P$ and $Q$ that are linear in each of $x, y, x^{- 1}, y^{- 1}$.
Though we have not pursued this line of inquiry here, one could follow the
same approach with the goal to deduce extensions of Theorem~\ref{thm:lucasx}
to more than two variables as well as to Laurent polynomials $P$ and $Q$ of
higher degree. This appears to quickly become considerably more intricate when
approached in full generality. However, it is likely that one can obtain
interesting results for special families of cases.

Likewise, it would be valuable to investigate general results in the spirit of
Theorem~\ref{thm:lucasx} modulo prime powers $p^r$. We point the interested
reader to Granville's engaging account \cite{granville-bin97} for an
extension of the Lucas congruences for binomial coefficients modulo $p^r$.

In a similar direction, it would be of interest to determine whether the
sequences in Theorem~\ref{thm:lucasx} satisfy generalized versions of the
Dwork congruences \eqref{eq:dwork}.

\subsection*{Acknowledgements}

This project was initiated as part of the first author's master's thesis
\cite{henningsen-msc}, which includes Proposition~\ref{prop:lucas:scheme} as
well as Corollary~\ref{cor:catalan}, under the second author's guidance. The
first author gratefully acknowledges summer support through a Sandra McLaurin
Graduate Fellowship, and the second author is grateful for support through a
Collaboration Grant (\#514645) awarded by the Simons Foundation. The authors
thank Eric Rowland for helpful comments and, in particular, for pointing out
the notion of $p$-regular sequences \cite{as-k-regular}.


\end{document}